\newcommand{\bbZ}{\mathbb{Z}}
\newcommand{\bft}{\mathbf{t}}
\newcommand{\bfa}{\mathbf{a}}
\newcommand{\bfsigma}{{\mbox{\boldmath $\sigma$}}}
\newcommand{\gal}{{\rm Gal}}
\newcommand{\nek}{,\ldots,}
\newcommand{\afrak}{\mathfrak{a}}
\newcommand{\Tr}{{\rm Tr}}
\renewcommand{\phi}{\varphi}
\newtheorem{lemma}{Lemma}[section]
\newtheorem{proposition}[lemma]{Proposition}
\newtheorem{theorem}{Theorem}
\newtheorem*{thm}{Theorem}
\theoremstyle{remark}
\newtheorem{remark}[lemma]{Remark}
\newtheorem{question}{Question}
\begin{document}

\fontsize{12}{18}\selectfont

\CompileMatrices %Compile xymatrix diagrams for faster Latexing

\title{On PAC extensions and scaled Trace forms}%
\author{Lior Bary-Soroker}%
\address{
The Raymond and Beverly Sackler
School of Mathematical Sciences
Tel Aviv University
Ramat Aviv, Tel Aviv 69978
ISRAEL}%
\email{barylior@post.tau.ac.il}%
\author{Dubi Kelmer}%
\address{
The Raymond and Beverly Sackler
School of Mathematical Sciences
Tel Aviv University
Ramat Aviv, Tel Aviv 69978
ISRAEL}%
\email{kelmerdu@post.tau.ac.il}%
%
%\thanks{}%
%\subjclass{}%
%\keywords{}%
%
%\date{}%
%\dedicatory{}%
%\commby{}%
% ----------------------------------------------------------------
\begin{abstract}
Any non-degenerate quadratic form over a Hilbertian field (e.g., a
number field) is isomorphic to a scaled trace form. In this work we
extend this result to more general fields. In particular,
prosolvable and prime-to-$p$ extensions of a Hilbertian field. The
proofs are based on the theory of PAC extensions.
\end{abstract}
\maketitle
% ----------------------------------------------------------------
\section{Introduction and results}
%------------------------------------------------
%results

Let $F/K$ be a finite separable field extension. It is equipped with
a \textit{trace form}, $x \mapsto \Tr_{F/K} (x^2)$. The
characterization of trace forms has been initiated by Conner and
Perlis, who were interested in the following question: Which
quadratic forms over $\mathbb Q$ are Witt-equivalent to a trace
form. In \cite{ConnerPerlis1984}, they showed that these forms are
precisely the positive non-degenerate quadratic forms (where
positive means signature $\geq 0$). This result was generalized to
number fields \cite{Scharlau1987}, and to some Hilbertian fields
\cite{KruskemperScharlau1988}. In \cite{Epkenhans1993}, Epkenhans
showed that over a number field any non-degenerate positive
quadratic form of dimension $\geq 4$ is isomorphic to a trace form,
and classified all classes of trace form of dimension $\leq 3$,
completing the work of Kr\"uskemper \cite{Kruskemper1992}.

There is a natural generalization of the trace form, that is, the
\textit{scaled trace form}. Namely, any separable extension $F/K$
and any nonzero element $\alpha\in F$ admits the non-degenerate
quadratic form
\[
x \mapsto \Tr_{F/K}
(\alpha x^2), \qquad x\in F.
\]
A natural question is the following. Given a field $K$, can any
non-degenerate quadratic form over $K$ be realized as a scaled trace
form.

In \cite{Scharlau1987,Waterhouse1986}, Scharlau and Waterhouse,
independently, gave a positive answer for number fields or more
generally for Hilbertian fields of characteristic $\neq 2$. Recall
that a Hilbertian field $K$ is a field with the property that for
any irreducible polynomial $f(T,X)$ over $K(T)$ there exist
infinitely many $a\in K$ for which $f(a,X)$ is irreducible.
\begin{thm}[Scharlau-Waterhouse]
Any non-degenerate quadratic form over a Hilbertian field of
characteristic $\neq 2$ is isomorphic to a scaled trace form.
\end{thm}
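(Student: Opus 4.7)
The plan is a two-step descent: first realize $q$ as a scaled trace form over a rational function field $K(T)$, then specialize back to $K$ via Hilbert's irreducibility theorem.

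Since $\mathrm{char}\,K\neq 2$, diagonalize $q\cong\langle a_1,\ldots,a_n\rangle$. The motivating observation is that the split \'etale algebra $K^n=\prod_{i=1}^n Ke_i$ already realizes $q$: with the scaling $(a_1,\ldots,a_n)$, the trace form is literally $\sum_i a_i x_i^2$. The strategy is to deform $K^n$ into a genuine field extension while controlling the scaled trace form. Fix distinct $b_1,\ldots,b_n\in K$ and an indeterminate $T$, and set
\[
f(T,X)=\prod_{i=1}^n(X-b_i)-T\in K[T,X],
\]
which is irreducible in $K(T)[X]$ by Gauss's lemma (it is linear in $T$). Let $L:=K(T)[X]/(f)$, a separable field extension of $K(T)$ of degree $n$ that degenerates at $T=0$ to $K^n$ via $\theta\mapsto b_i$ on the $i$-th factor, where $\theta$ denotes the image of $X$.

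The heart of the argument is to construct $\alpha(T)\in L^\times$ so that the scaled trace form $\Tr_{L/K(T)}(\alpha(T)x^2)$ is isomorphic over $K(T)$ to $\langle a_1,\ldots,a_n\rangle\otimes_K K(T)$. A natural first guess is the Lagrange interpolation
\[
\alpha(T)=\sum_{i=1}^n a_i\prod_{j\neq i}\frac{\theta-b_j}{b_i-b_j},
\]
which specializes correctly to $(a_1,\ldots,a_n)$ at $T=0$. Computing the Gram matrix in the Lagrange basis of $L\otimes_{K(T)}\overline{K(T)}\cong\overline{K(T)}^n$ (aided by the Euler identity $\Tr(\theta^k/f'(\theta))=\delta_{k,n-1}$ for $0\le k\le n-1$) produces a diagonalization $\langle c_1(T),\ldots,c_n(T)\rangle$ over $K(T)$ with explicit $c_i(T)\in K(T)^\times$. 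The creative step is to modify $\alpha$---possibly by enlarging the parameter space to $K(T_1,\ldots,T_m)$---so that each quotient $c_i(T)/a_i$ is a square in the rational function field.

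Once the generic diagonalization sits in the correct square classes, descent is immediate. By Hilbert's irreducibility theorem there are infinitely many $t_0\in K^m$ with $f(t_0,X)$ irreducible in $K[X]$; avoiding finitely many pole loci, we may also assume each $c_i(t_0)/a_i\in K^{\times 2}$. For such $t_0$, $F:=K[X]/(f(t_0,X))$ is a separable extension of degree $n$ and $\alpha:=\alpha(t_0)\in F^\times$ realizes $\langle c_1(t_0),\ldots,c_n(t_0)\rangle\cong\langle a_1,\ldots,a_n\rangle=q$ as a scaled trace form. The main obstacle is the preceding square-class control: a specialization of a form is a pointwise invariant but the isomorphism class over the rational function field is finer, and engineering the scaling so that the diagonal entries of the generic scaled trace form lie in the correct square classes is the crux of the Scharlau--Waterhouse construction.
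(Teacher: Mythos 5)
There is a genuine gap: the step you yourself call ``the creative step'' and ``the crux'' --- producing a scaling $\alpha$ (possibly after enlarging the parameter space) for which the generic scaled trace form $\Tr_{L/K(T)}(\alpha x^2)$ is isomorphic \emph{over the rational function field} to $\langle a_1,\ldots,a_n\rangle$, i.e.\ for which every $c_i(T)/a_i$ is a square --- is never carried out, and no argument is given that such a modification exists. Without it the rest proves nothing: Hilbert irreducibility plus avoiding poles only yields a field $F=K[X]/(f(t_0,X))$ and a form $\langle c_1(t_0),\ldots,c_n(t_0)\rangle$, and, as you note, the specialization at $T=0$ (where $L$ degenerates to the split algebra $K^n$, not a field) constrains nothing about isomorphism classes over $K(T)$ or after an irreducible specialization. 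There are further unproved assertions on the way: the Lagrange basis lives only over $\overline{K(T)}$, so it does not by itself give a $K(T)$-rational diagonalization $\langle c_1(T),\ldots,c_n(T)\rangle$; and matching $n$ square classes simultaneously by varying $\alpha$ is exactly the hard content, since over a general Hilbertian field quadratic forms are not determined by discriminant-type invariants. So the proposal is an outline whose essential point is left open.

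The actual Scharlau--Waterhouse argument (which Section~\ref{sec:2} of this paper follows, with Hilbertianity later weakened) sidesteps any control of forms over $K(T)$ by means of Lemma~\ref{lem:Her}: a diagonal Gram matrix $D$ represents a scaled trace form over $K$ if and only if there is a symmetric $A\in{\rm Mat}_n(K)$ such that the characteristic polynomial of $AD$ is separable and irreducible. One then takes a generic symmetric matrix $T$, shows (Lemma~\ref{lem:GaloisGroup}) that the characteristic polynomial $p(\bft,x)$ of $TD$ is separable, irreducible, and has Galois group $S_n$ over $K(\bft)$, and uses Hilbertianity to specialize $\bft\mapsto\bfa\in K$ keeping $p(\bfa,x)$ irreducible; Lemma~\ref{lem:Her} then does all the isomorphism bookkeeping that your square-class engineering was meant to do. If you wish to salvage your deformation-of-$K^n$ construction you would have to prove the generic square-class control, which is essentially as hard as the theorem itself; switching to the characteristic-polynomial criterion is the efficient route.
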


%The goal of this note is to generalize this result to \textit{pseudo
%algebraically closed (PAC) fields} and more generally to fields
%which have \textit{PAC extensions}. The main idea is the observation
%that the proof of Scharlau and Waterhouse for Hilbertian fields, in
%fact, requires only a weaker version of Hilbert's Irreducibility
%Theorem which holds for fields with PAC extensions.

In this note, we generalize this result to a larger class of fields.
Note that an obvious necessary condition for a quadratic form of
dimension $n$ over $K$ to be isomorphic to a scaled trace form is
that $K$ has a separable extension of degree $n$. Thus, a more
subtle question is the following:
\begin{question}\label{que:main}
Given a field $K$ having a separable extension of degree $n$, can
any non-degenerate quadratic form of dimension $n$ over $K$ be
realized as a scaled trace form.
\end{question}
\begin{remark}
In general the answer to this question is negative. For example,
$\mathbb{R}$ has a unique separable extension, $\mathbb{C}$, of
degree $2$. Any scaled trace form $x\mapsto \Tr_{\mathbb C/\mathbb
R}(\alpha x^2)$ is isotropic (since for $x=\sqrt{i/\alpha}$ we have
$\Tr_{\mathbb C/\mathbb R}(\alpha x^2)=\Tr_{\mathbb C/\mathbb
R}(i)=0$). Hence, the non-isotropic quadratic form $\langle
1,1\rangle$ is not isomorphic to a scaled form.
\end{remark}

\subsection*{Prosolvable extensions} An extension
$K/K_0$ is called prosolvable, if any finite subextension $L_0/K_0$
(with $L_0\subseteq K$) is solvable, i.e., the Galois group of the
Galois closure of $L_0/K_0$ is solvable.

\begin{theorem}\label{thm:sol}
Let $K$ be a prosolvable extension of a Hilbertian field of
characteristic $\neq 2$. Then every non-degenerate quadratic form
over $K$ of dimension $>4$ is isomorphic to a scaled trace form.
\end{theorem}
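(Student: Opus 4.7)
The plan is to deduce the theorem from a general statement about PAC extensions, combined with Scharlau-Waterhouse over the Hilbertian base. The first ingredient I would invoke is the (known) fact that every prosolvable extension $K/K_0$ of a Hilbertian field $K_0$ is a PAC extension. Given this, it is enough to prove the following stronger statement: for any PAC extension $K/K_0$ with $K_0$ Hilbertian of characteristic $\neq 2$, every non-degenerate quadratic form over $K$ of dimension $> 4$ is isomorphic to a scaled trace form. The remaining work is then purely about PAC extensions.

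Given such a form $q$ of dimension $n>4$ over $K$, I would diagonalize $q = \langle c_1, \ldots, c_n \rangle$ and descend the coefficients to a finite intermediate subextension $K_1 = K_0(c_1,\ldots,c_n) \subseteq K$. The next step is to set up a parameter variety $V$ defined over $K_1$ whose points $(\alpha, \bft)$, with $\bft = (t_0,\ldots,t_{n-1})$, parametrize a monic separable polynomial $f_\bft(X) = X^n + t_{n-1}X^{n-1} + \cdots + t_0$ and a scalar $\alpha$ such that the scaled trace form $x \mapsto \Tr(\alpha x^2)$ on $K[X]/(f_\bft)$ is isometric to $q$. Newton's identities express the Gram matrix entries as explicit polynomials in $(\alpha,\bft)$, and the classifying invariants of quadratic forms (rank, discriminant, Hasse invariant) translate the isometry condition into an explicit polynomial system cutting out $V$.

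The crux of the argument is to isolate an absolutely irreducible component $V_0 \subseteq V$ carrying a simple $K_1$-rational point. For the existence of the point, I would apply Scharlau-Waterhouse over $K_1$, which is itself Hilbertian as a finite extension of the Hilbertian field $K_0$; this yields an honest realization of $q$ as a scaled trace form over $K_1$, in particular a simple $K_1$-point on $V$. A Galois-theoretic analysis of the fibers of the natural projection then identifies the component $V_0$ through that point and shows it is absolutely irreducible. Once $V_0$ is absolutely irreducible with a simple $K_1$-point, the PAC property of $K/K_0$ (and hence of $K/K_1$) supplies a $K$-rational point on $V_0$, which is precisely the required scaled trace presentation of $q$ over $K$.

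The main obstacle I foresee is the geometric step of establishing absolute irreducibility of $V_0$, and this is where I expect the dimension hypothesis $n > 4$ to be essential. The generic polynomial of degree $n$ has Galois group $S_n$, and for $n \geq 5$ the non-solvability and primitivity of $S_n$ provide enough mixing to forbid the Galois-theoretic decompositions of $V$ that would block absolute irreducibility of the relevant component. For $n \leq 4$ the solvable structure of $S_n$ causes spurious splittings of $V$ into pieces none of which is geometrically irreducible, and the PAC specialization cannot be carried out uniformly. A secondary but delicate point is handling the Hasse invariant throughout: it must be imposed as a Brauer-class condition that interacts well with the rest of the polynomial data, rather than destroying the irreducibility gained from the $S_n$-analysis.
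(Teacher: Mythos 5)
Your argument collapses at the very first step: it is not true (and is certainly not a known fact) that a prosolvable extension $K/K_0$ of a Hilbertian field is a PAC extension. The trivial extension $K_0/K_0$ is prosolvable, so your claim would make every Hilbertian field a PAC field, which fails already for $\mathbb{Q}$; likewise $\mathbb{Q}(\sqrt{2})/\mathbb{Q}$ is solvable but not a PAC extension. There is also a directional confusion in how you use the PAC property: for a PAC extension $M/K$ the rational points produced have coordinates in the \emph{small} field $K$ (with irreducibility data over the big field $M$), whereas your final step treats $K$ as if it were a PAC (or large/ample) field, asking for $K$-points on an absolutely irreducible variety over $K_1$ with a simple $K_1$-point --- a property that prosolvable extensions of Hilbertian fields are not known to have and which is not what the theory of PAC extensions provides. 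The paper's route is different: $K$ stays the small field throughout, and one \emph{constructs} an auxiliary PAC extension $M=KK_{0s}(\bfsigma)$ of $K$ (Jarden--Razon: for almost all $\bfsigma\in\gal(K_0)^e$, $K_{0s}(\bfsigma)/K_0$ is PAC with free absolute Galois group, and PAC-ness is preserved under the base change to $K$). The hypothesis $n>4$ enters exactly here, not where you place it: one needs $M$ to have a separable extension of degree $n$, and this is obtained because the free group $\langle\bfsigma\rangle$ has $A_n$ as a quotient, $A_n$ ($n\geq 5$) is simple non-abelian hence shares no nontrivial quotient with the prosolvable group $\gal(\hat K K_{0s}(\bfsigma)/K_0)$, and a short group-theoretic lemma then transfers an index-$n$ subgroup to $\gal(M)$. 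Theorem~\ref{thm:main} then applies directly (plus a L\"owenheim--Skolem reduction for uncountable $K$, which you do not address).

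Even setting the PAC issue aside, your parameter-variety scheme has independent problems. Quadratic forms over a general field are not classified by rank, discriminant and Hasse invariant, so the isometry condition cannot be encoded by equations in those invariants; and the absolute irreducibility of the component $V_0$, which you acknowledge as the crux, is left entirely unproven. The paper avoids this geometry altogether: by Lemma~\ref{lem:Her} it suffices to find a symmetric matrix $A$ over $K$ with $AD$ having a separable irreducible characteristic polynomial, and this is achieved by computing that the characteristic polynomial of $TD$ for a generic symmetric $T$ has Galois group $S_n$ over $\tilde M(\bft)$ (Lemma~\ref{lem:GaloisGroup}) and then specializing $\bft$ into $K$ via the weak Hilbert irreducibility theorem for PAC extensions (Lemma~\ref{prop:weakHIT}), which is where the separable degree-$n$ extension of $M$ is used.
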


Note that there are solvable extensions having no separable
extensions of degree $\leq 4$ (e.g., the maximal solvable
extension), so in this generality the condition on the dimension is
necessary. For prosolvable extension and quadratic forms of
dimension $n=3,4$, we do not know the answer for Question
\ref{que:main}. However in case $n=2$ we show that the answer is
negative (Proposition~\ref{prop:prosolvable}).

\subsection*{Prime-to-$p$ extensions} Let $p$ be a prime number.
An algebraic extension $K/K_0$ is called prime-to-$p$, if $p$ does
not divide the degree of any finite subextension $L_0/K_0$ (with
$L_0\subseteq K$).

\begin{theorem}\label{thm:pi}
Let $p$ be a prime and let $K$ be a separable extension of a
Hilbertian field $K_0$ of characteristic $\neq 2$ such that the
Galois closure of $K/K_0$ is prime-to-$p$ over $K_0$. Then every
non-degenerate quadratic form over $K$ is isomorphic to a scaled
trace form.
\end{theorem}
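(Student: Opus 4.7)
The plan is to reduce the problem to the classical Scharlau-Waterhouse theorem over a finite Hilbertian subfield and then use the PAC-extension machinery of the paper to promote the resulting realization from the subfield to $K$.

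Since the Gram matrix of $q$ has only finitely many entries, there is a finite subextension $L$ of $K/K_0$ over which $q$ is defined, i.e., $q = q_L \otimes_L K$ for some non-degenerate form $q_L$ over $L$. Any finite separable extension of a Hilbertian field is again Hilbertian, so $L$ is Hilbertian; moreover the Galois closure of $K/L$ sits inside that of $K/K_0$ and hence is still prime-to-$p$ over $L$. Next I would reproduce the Scharlau-Waterhouse construction in its parametric form over $L(\bft)$, obtaining a family $(f_{\bft}(X),\alpha_{\bft})$ with $f_{\bft}$ monic of degree $n=\dim q$ in $L[\bft][X]$ and $\alpha_{\bft}\in L(\bft)[X]/(f_{\bft})$ whose scaled trace form is isomorphic to $q_L\otimes_L L(\bft)$. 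The parameter locus $V\subset \bbA^r_L$ of specializations $\bfa$ for which $(f_{\bfa},\alpha_{\bfa})$ still realizes $q_L$ is absolutely irreducible over $L$, and by Hilbert irreducibility applied to $L$ the points $\bfa\in V(L)$ at which $f_{\bfa}$ is irreducible over $L$ form a Zariski-dense subset.

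The crux is to secure $\bfa$ at which $f_{\bfa}$ is irreducible not just over $L$ but over $K$: for such an $\bfa$ the quotient $F = K[X]/(f_{\bfa})$ is a genuine separable degree-$n$ extension and $\Tr_{F/K}(\alpha_{\bfa} x^2)\cong q$. A polynomial irreducible over $L$ can become reducible over $K$ only through a non-trivial intersection of its Galois closure with $K$, and any such intersection is a finite prime-to-$p$ extension of $L$ (since the Galois closure of $K/K_0$, and hence of $K/L$, is prime-to-$p$). The PAC property of prime-to-$p$ extensions of Hilbertian fields, which is the conceptual core of the paper, then provides specializations $\bfa\in L^r$ for which this intersection collapses to $L$, so that the generic Galois group of $f_{\bft}$ over $L(\bft)$ persists as the Galois group of $f_{\bfa}$ over $K$ and $f_{\bfa}$ is irreducible over $K$.

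The main obstacle is precisely this last step: arranging a single specialization $\bft\mapsto\bfa$ whose irreducibility descends from $L(\bft)$ not merely to $L$ (guaranteed by Hilbertianity of $L$) but all the way to $K$. The full force of the prime-to-$p$ hypothesis is exercised here through the PAC-extension framework, which is what distinguishes the argument from a direct appeal to Scharlau-Waterhouse and which also explains why no dimension restriction is needed, in contrast to Theorem~\ref{thm:sol}: in the prime-to-$p$ setting, $K$ retains separable extensions of every relevant degree (obtained by pulling back to $K$ a degree-$n$ extension of $K_0$ not absorbed by the Galois closure of $K/K_0$).
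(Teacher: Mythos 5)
There is a genuine gap at the step you yourself identify as the crux. You invoke ``the PAC property of prime-to-$p$ extensions of Hilbertian fields'' to pass from irreducibility over $L$ to irreducibility over $K$, but no such property exists and it is not what the paper proves: a prime-to-$p$ extension of a Hilbertian field need not be a PAC extension (the trivial extension $K_0/K_0$ is prime-to-$p$, and $K_0/K_0$ PAC would mean $K_0$ is a PAC field, which fails already for $\mathbb{Q}$). The actual mechanism in the paper is quite different: one does not try to make $K/L$ (or $K/K_0$) play the role of a PAC extension; instead one constructs an auxiliary PAC extension \emph{of} $K$, namely $M=KK_{0s}(\bfsigma)$ for almost all $\bfsigma\in\gal(K_0)^e$ with $e\geq 2$, using Jarden--Razon (Proposition~\ref{thm:cite}a.\ and the base-change statement \ref{thm:cite}b.). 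The prime-to-$p$ hypothesis is then used purely group-theoretically: with $G=\left<\bfsigma\right>$ free of rank $e$, $N=\gal(\hat K K_{0s}(\bfsigma))$ and $N_0=\gal(M)$, one shows $N_0$ has open subgroups of \emph{every} index by feeding $H=\bbZ/m\bbZ\rtimes\bbZ/p^k\bbZ$ (with $m=nl$, $l\equiv 1\bmod p$ supplied by Dirichlet's theorem) into Lemma~\ref{lem:grp}; the point is that $H$ is generated by its $p$-Sylow subgroups, hence has no nontrivial prime-to-$p$ quotient, while every finite quotient of $G/N$ is prime-to-$p$. Only then does Theorem~\ref{thm:main} (via the generic symmetric matrix and the weak Hilbert irreducibility Lemma~\ref{prop:weakHIT} for the PAC extension $M/K$) produce a specialization $\bfa\in K^{n^2}$ with the characteristic polynomial irreducible over $M$, hence over $K$. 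Your outline contains no substitute for this construction of $M$ and for the group-theoretic input, so the decisive step is unsupported.

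Two secondary points. First, your closing explanation of why there is no dimension restriction --- that $K$ has separable extensions of every degree ``obtained by pulling back a degree-$n$ extension of $K_0$ not absorbed by the Galois closure of $K/K_0$'' --- is also unjustified: if $\hat K=K$ is, say, the maximal Galois extension of $K_0$ with pro-prime-to-$p$ Galois group, then for $n<p$ \emph{every} degree-$n$ separable extension of $K_0$ is absorbed by $\hat K$, so no such pullback exists; the existence of degree-$n$ extensions in the relevant place (for $M$, in the paper's argument) again comes from the semidirect-product construction of Lemma~\ref{lem:semi}, not from pulling back extensions of $K_0$. Second, the reduction to a finite subextension $L$ is not how the paper handles size issues and is not needed: the argument of Theorem~\ref{thm:main} works directly with the form over $K$, and the only reduction performed in the paper is L\"owenheim--Skolem to a countable elementary substructure when $K$ is uncountable.
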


Notice that for $p=2$ and quadratic forms of dimension $>4$,
Theorem~\ref{thm:pi} is a special case of Theorem~\ref{thm:sol}.
(Recall that, by the famous Feit-Thompson theorem, any group of odd
order is solvable.)
%---------------------------------------------------
% PAC fields
\subsection*{PAC fields}
Hilbert's Nullstellensatz asserts that every variety defined over an
algebraically closed field $K$ has a ($K$-rational) point. A field
satisfying this property is called Pseudo Algebraically Closed
(abbreviated PAC). An equivalent condition for a field $K$ to be PAC
is that any absolutely irreducible polynomial in two variables
$f(X,Y)$ with coefficients in $K$ has infinitely many roots
$(x,y)\in K^2$ \cite[Theorem~11.2.3]{FriedJarden2005}. There are
abundance of PAC fields, in fact, in some sense most algebraic
extensions of a countable Hilbertian field with a finitely generated
absolute Galois group are PAC \cite[Theorem.
18.6.1]{FriedJarden2005}. Over PAC fields we give a positive answer
to Question~\ref{que:main}.

\begin{theorem}\label{thm:1}
Let $K$ be a PAC field of characteristic $\neq 2$. A non-degenerate
quadratic form of dimension $n$ over $K$ is isomorphic to a scaled
trace form if and only if $K$ has a separable extension of degree
$n$.
\end{theorem}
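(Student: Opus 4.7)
The ``only if'' direction is immediate, since $\dim\Tr_{F/K}(\alpha x^2)=[F:K]$. For the converse, fix a separable extension $F/K$ of degree $n$ and a non-degenerate quadratic form $q$ of dimension $n$ over $K$. The plan is to use two consequences of the PAC hypothesis on $K$ to reduce the problem to matching the two classical invariants: dimension and discriminant.

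The first ingredient is a classification result over $K$. Since every Severi--Brauer variety is absolutely irreducible, the PAC hypothesis forces $\mathrm{Br}(K)=0$; in particular every $2$-fold Pfister form over $K$ is hyperbolic, so $I^2(K)=0$ in the Witt ring $W(K)$. Hence if $q_1$ and $q_2$ are non-degenerate forms over $K$ of the same dimension and the same discriminant in $K^*/(K^*)^2$, then $q_1\perp(-q_2)\in I^2(K)=0$, and Witt cancellation gives $q_1\cong q_2$.

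The second ingredient is that the norm map $N_{F/K}\colon F^*\to K^*$ is surjective. For any $c\in K^*$, the variety $V_c=\{x\in F:N_{F/K}(x)=c\}$ is a torsor under the norm-one torus $R^{(1)}_{F/K}\mathbb{G}_m$, which is geometrically integral, so $V_c$ is absolutely irreducible and has a $K$-rational point by PAC. Combined with the standard identity
\[
\mathrm{disc}\bigl(\Tr_{F/K}(\alpha x^2)\bigr)\equiv \mathrm{disc}(F/K)\cdot N_{F/K}(\alpha)\pmod{(K^*)^2},
\]
this surjectivity lets one choose $\alpha\in F^*$ so that $\Tr_{F/K}(\alpha x^2)$ has the same discriminant as $q$. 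The resulting scaled trace form then shares both dimension and discriminant with $q$, hence is isomorphic to $q$ by the previous paragraph.

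I expect the main technical obstacle to be the classification step: each link in the chain ``PAC $\Rightarrow$ $\mathrm{Br}(K)=0 \Rightarrow I^2(K)=0 \Rightarrow$ forms classified by dimension and discriminant'' is standard, but the last link in particular requires some care with the sign conventions for the discriminant of $q_1\perp(-q_2)$. The norm-surjectivity step and the discriminant identity for the scaled trace form are essentially routine computations. An alternative route, closer to the title of the paper, would be to apply Scharlau--Waterhouse over the Hilbertian field $K(T)$ and then specialize $T$ to a $K$-point on the absolutely irreducible open subvariety of ``admissible specializations'' using PAC; this viewpoint is presumably what gets generalized via the theory of PAC extensions in the subsequent theorems.
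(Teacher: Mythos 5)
Your proof is correct, but it is a genuinely different argument from the one in the paper. The paper obtains this theorem as an immediate corollary of its main theorem on PAC \emph{extensions} (take $M=K$, since $K$ is PAC as a field iff $K/K$ is a PAC extension); the engine there is the Hermitian-form criterion of Lemma~\ref{lem:Her} (a diagonal matrix $D$ represents a scaled trace form iff some symmetric $A$ makes the characteristic polynomial of $AD$ separable irreducible), the computation that for a generic symmetric $T$ the characteristic polynomial of $TD$ has Galois group $S_n$ even over $\tilde M(\bft)$, and a weak Hilbert irreducibility theorem for PAC extensions. You instead exploit consequences that are special to the base field itself being PAC: triviality of the Brauer group (via rational points on Severi--Brauer varieties, or equivalently projectivity of $\gal(K)$), hence $I^2(K)=0$ and classification of forms by dimension and discriminant; plus surjectivity of $N_{F/K}$ (PAC applied to the geometrically integral norm-torus torsors) together with the identity $\det\bigl(\Tr_{F/K}(\alpha e_ie_j)\bigr)=N_{F/K}(\alpha)\,\mathrm{disc}(F/K)$ to match the discriminant. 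All of these steps are standard and the sign bookkeeping you flag does work out ($q_1\perp(-q_2)$ has trivial signed discriminant when $\dim q_1=\dim q_2$ and $\det q_1\equiv\det q_2$), so the argument is complete for fields of characteristic $\neq 2$. What each approach buys: yours is shorter and uses only classical quadratic-form theory, avoiding the generic Galois group computation and the PAC-extension irreducibility lemma; but it is rigid in that it needs $\mathrm{Br}(K)=0$ and $I^2(K)=0$ for the field $K$ carrying the form, so it cannot be adapted to Theorem~\ref{thm:main}, where $K$ may be, say, a number field with a rich Witt ring and only the auxiliary field $M$ is PAC over $K$. The paper's heavier route is exactly what makes the generalization to prosolvable and prime-to-$p$ extensions possible, as you correctly anticipate in your closing remark.
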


%---------------------------------------------------
% PAC extensions
\subsection*{PAC extensions}
In \cite{JardenRazon1994}, Jarden and Razon generalized the notion
of PAC fields to field extensions: A field extension $M/K$ is said
to be a \textbf{PAC extension} if for any absolutely irreducible
polynomial in two variables $f(X,Y)\in M[X,Y]$, separable in $Y$
(i.e., $\frac{\partial f}{\partial X}\neq 0$) there are infinitely
many $(x,y)\in K \times M$ such that $f(x,y)=0$.
%if for every nonempty variety $V$ of dimension $r$ defined over $M$
%and for every dominating separable rational map $\phi\colon V \to
%\bbA^r$ over $M$ there exists $a\in V(M)$ such that $\phi(a)\in
%K^r$.
Clearly, a field $K$ is PAC (as a field) if and only if the trivial
extension $K/K$ is PAC (as an extension). On the other hand, we note
that a field extension $M/K$ with $M$ a PAC field is not necessarily
a PAC extension, see \cite[Page 9]{Bary-SorokerDTRF}. For fields
having a PAC extension we give a partial answer for Question
\ref{que:main}.

\begin{theorem}\label{thm:main}
Let $K$ be a field of characteristic $\neq 2$. Assume that $K$ has a
PAC extension $M/K$ which has a separable extension of degree $n$.
Then every non-degenerate quadratic form of dimension $n$ over $K$
is isomorphic to a scaled trace form.
%Let $M/K$ be a PAC extension of characteristic $\neq 2$. Then every
%non-degenerate quadratic form over $K$ of dimension $n$ is
%isomorphic to a scaled trace form, provided that $M$ has a separable
%extension of degree $n$.
\end{theorem}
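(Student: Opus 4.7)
My plan is to follow the structure of the Scharlau--Waterhouse proof for Hilbertian fields, replacing the use of Hilbert's irreducibility theorem by the analogous specialization property of PAC extensions. Write $q=\langle d_1\nek d_n\rangle$ in diagonal form, introduce indeterminates $\bft=(t_1\nek t_n)$ and $\bfa=(a_0\nek a_{n-1})$, and form the generic monic polynomial $f_\bft(X)=X^n+t_1X^{n-1}+\cdots+t_n$ together with the generic $K(\bft)$-algebra $A_\bft=K(\bft)[X]/(f_\bft)$ and its generator $\theta$. Setting $\alpha_\bfa=a_0+a_1\theta+\cdots+a_{n-1}\theta^{n-1}$, the generic scaled trace form $Q_{\bft,\bfa}(x)=\Tr(\alpha_\bfa x^2)$ has a Gram matrix polynomial in $(\bft,\bfa)$.

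Following the Scharlau--Waterhouse analysis, I would first establish, purely algebraically over $K(\bft,\bfa)$, a universal isometry identifying $Q_{\bft,\bfa}$ with $q$ after a polynomial change of coordinates. This step uses no Hilbertianity and is essentially a parametric diagonalization matching the diagonal entries of $Q_{\bft,\bfa}$ with $d_1\nek d_n$. It produces a geometrically integral $K$-subvariety $W_q\subseteq\bbA^{2n}$ whose points $(\bft_0,\bfa_0)$ correspond to specializations at which $f_{\bft_0}$ is separable and $Q_{\bft_0,\bfa_0}\cong q$ as quadratic forms over $K$; a $K$-point of $W_q$ at which $f_{\bft_0}$ is in addition irreducible over $K$ yields the desired presentation of $q$ as a scaled trace form $\Tr_{K[X]/(f_{\bft_0})/K}(\alpha_{\bfa_0}\cdot^2)$.

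To produce such a $K$-point, I would use the hypothesis that $M$ has a separable extension of degree $n$ to obtain an $M$-point of $W_q$ at which $f_{\bft_0}$ is irreducible over $M$: the minimal polynomial of a generator of the degree-$n$ extension fixes $\bft_0\in M^n$ with $f_{\bft_0}$ irreducible over $M$, and one adjusts $\bfa_0\in M^n$ so that $(\bft_0,\bfa_0)\in W_q(M)$. I would then invoke the higher-dimensional form of the PAC extension property, obtained from the defining plane-curve property by standard slicing (see \cite{JardenRazon1994,Bary-SorokerDTRF}), to descend to a $K$-point at which $f_{\bft_0}$ remains irreducible over $K$. The main obstacle is precisely this last descent: establishing a PAC-extension analogue of Hilbert's irreducibility theorem, which asserts that the Zariski-dense set $W_q(K)$ meets the (non-algebraic, Galois-theoretic) locus where $f_{\bft_0}$ is $K$-irreducible. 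Together with the careful extraction of the geometrically integral subvariety $W_q$ from the Scharlau--Waterhouse construction, this is where I expect the bulk of the technical effort to lie.
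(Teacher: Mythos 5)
Your overall strategy (a generic family of scaled trace forms plus a PAC-extension substitute for Hilbert irreducibility) is in the spirit of the paper, but as outlined it has two genuine gaps, and both occur exactly where you defer the work. First, the locus $W_q$ you want is not cut out by equations in $(\bft,\bfa)$ alone: ``$Q_{\bft_0,\bfa_0}\cong q$ over $K$'' is an arithmetic condition (discriminant classes, isometry over $K$), so at best you must add auxiliary variables (an isometry matrix $S$ with $S^t G_{\bft,\bfa}S=D$), and then geometric integrality of the resulting variety, the density of the relevant rational points, and — crucially — the Galois-theoretic structure of the polynomial family $f_\bft$ restricted to that variety are all nontrivial and never addressed. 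The paper avoids this entirely by using the known criterion (Lemma~\ref{lem:Her}): with the Gram matrix $D$ of $q$ fixed, it suffices to find one symmetric $A\in{\rm Mat}_n(K)$ such that $\chi_{AD}$ is separable and irreducible over $K$; no matching variety $W_q$ is needed, and the generic object is the characteristic polynomial of $TD$ for a generic symmetric $T$.

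Second, the ``descent'' step is not available in the form you invoke it. The PAC-extension property only produces partially rational points $(x,y)\in K\times M$ on absolutely irreducible curves; no amount of slicing turns it into a Hilbert irreducibility theorem (a full HIT is false for PAC fields — an algebraically closed field is PAC and has no irreducible specializations of degree $>1$). What is true, and what the paper uses (Lemma~\ref{prop:weakHIT}, from \cite{Bary-SorokerDTRF}), is a \emph{weak} HIT: if $f(\bft,x)\in M[\bft,x]$ is separable with Galois group the full symmetric group $S_n$ over $\tilde M(\bft)$, and $M$ has a separable extension of degree $n$, then there are infinitely many $\bfa\in K^s$ with $f(\bfa,x)$ irreducible over $M$. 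So the hypothesis that $M$ has a degree-$n$ separable extension enters as an input to the specialization lemma, not (as in your outline) to manufacture a single $M$-point of $W_q$ — a point which, incidentally, already presupposes the PAC-field case of the theorem over $M$ (``one adjusts $\bfa_0$''), and which in any case cannot be ``descended'' to a $K$-point. To apply the weak HIT you must verify the $S_n$ geometric monodromy of your family over $\tilde M$; this is the analogue of the paper's Lemma~\ref{lem:GaloisGroup} (the characteristic polynomial of $TD$ has Galois group $S_n$ over $\tilde M(\bft)$), and it is the one computation your proposal omits while it is indispensable for the argument to close.
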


This theorem is the main theorem of the paper and we deduce all the
previous theorems from it. For this deduction, we use the result of
Jarden and Razon showing that, as for PAC fields, most algebraic
extensions of a countable Hilbertian field with a finitely generated
absolute Galois group are PAC extensions \cite[Proposition
3.1]{JardenRazon1994}.

%This theorem only gives a partial answer to Question \ref{que:main},
%that is, we require that $M$ (and not $K$) has a separable extension
%of degree $n$.
%
%However, in many cases the field $K$ in question has many PAC
%extensions, so it is possible to find one having a suitable
%separable extension.

\subsection*{Outline}
In section  \ref{sec:2} we prove Theorem~\ref{thm:main}. The proof
goes along the lines of Scharlau-Waterhouse, where the use of
Hilbertianity is replaced by a weaker property (Lemma
\ref{prop:weakHIT}). In order to apply this property, we have to
compute the Galois group of the characteristic polynomial of a
generic symmetric matrix times some diagonal matrix.
Theorem~\ref{thm:1} is then an obvious result of
Theorem~\ref{thm:main} (by taking $M=K$). In section \ref{sec:3} we
use the abundance of PAC extensions over a Hilbertian field to
deduce Theorems \ref{thm:sol} and \ref{thm:pi} from Theorem
\ref{thm:main}. In what follows all fields are assumed to have
characteristic $\neq 2$.

\vspace{10pt}

\noindent\textsc{Acknowledgments.} We thank Ido Efrat, Dan Haran,
Moshe Jarden, and Ze\'ev Rudnick for helpful comments on an earlier
draft of this paper. This work is part of the first author's Ph.D.
thesis done at Tel Aviv University, supervised by Prof. Dan Haran.

\section{Proof of Theorem~\ref{thm:main}}\label{sec:2}
We start with a few auxiliary lemmata. The first lemma is a known
result from the theory of Hermitian forms, giving a condition for a
quadratic form to be isomorphic to a scaled trace form.
%For the reader's convenience we shall include a proof.

\begin{lemma}\label{lem:Her}
A nonzero symmetric matrix $D\in {\rm Mat}_n(K)$ represents a scaled
trace form over $K$ if and only if there is another symmetric matrix
$A\in {\rm Mat}_n(K)$ such that the characteristic polynomial of
$AD$ is separable and irreducible over $K$.
\end{lemma}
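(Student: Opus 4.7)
The plan is to reinterpret the lemma as the linear-algebraic translation of the elementary fact that, for any separable extension $F/K$ and any $\alpha\in F^\times$, multiplication by an element of $F$ is self-adjoint with respect to the bilinear form $(x,y)\mapsto\Tr_{F/K}(\alpha xy)$. In matrix language, this self-adjointness will produce the required $A$ in one direction, while in the other the irreducibility of the characteristic polynomial of $AD$ will produce the extension $F$ and the symmetry of $A,D$ will supply the needed $F$-associativity.

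For the forward direction, I would fix a $K$-basis $e_1,\dots,e_n$ of $F$ in which $D$ is the Gram matrix of $(x,y)\mapsto\Tr_{F/K}(\alpha xy)$; in particular $D$ is invertible, since this form is non-degenerate. Choose a primitive element $\theta$ of $F/K$, so that its minimal polynomial $p(x)$ is irreducible and separable of degree $n$, and let $M$ be the matrix of multiplication-by-$\theta$ in this basis, whose characteristic polynomial is $p(x)$. The commutativity of $F$ gives $\Tr_{F/K}(\alpha(\theta x)y)=\Tr_{F/K}(\alpha x(\theta y))$, which translates to the matrix identity $M^tD=DM$, equivalently $D^{-1}M^t=MD^{-1}$. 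Setting $A:=MD^{-1}$ one immediately reads off $A^t=A$, and $AD=M$ has the required characteristic polynomial.

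For the converse, given symmetric $A$ with $p(x):=\det(xI-AD)$ irreducible and separable, set $F:=K[x]/(p(x))=K(\theta)$. Turn $K^n$ into an $F$-module by letting $\theta$ act as $AD$; because $p$ is irreducible, $K^n$ is one-dimensional over $F$, so any nonzero $v\in K^n$ yields an $F$-linear isomorphism $\psi\colon F\to K^n$, $a\mapsto a\cdot v$. Pull $D$ back to $F$ by $B(a,b):=\psi(a)^tD\psi(b)$; then $B$ is symmetric and $K$-bilinear. Using $A^t=A$ and $D^t=D$ one computes
\[
B(\theta a,b)=\psi(a)^t D A D\psi(b)=B(a,\theta b),
\]
so by $K$-linearity $B$ is $F$-associative: $B(ca,b)=B(a,cb)$ for all $c\in F$. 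Hence $B(a,b)=\ell(ab)$ for the $K$-linear functional $\ell(c):=B(c,1)$, and non-degeneracy of the trace form of the separable extension $F/K$ yields $\alpha\in F$ with $\ell(c)=\Tr_{F/K}(\alpha c)$. Thus $D$ represents the scaled trace form $(x,y)\mapsto\Tr_{F/K}(\alpha xy)$.

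The main obstacle is largely bookkeeping, since the entire content is the self-adjointness / $F$-associativity translation above. The one technicality worth flagging is ensuring in the converse that $\alpha\ne0$, i.e.\ that $D$ is invertible: for $n\ge2$ the irreducibility of $p$ forces $p(0)\ne0$, so $\det(AD)\ne0$ and hence $\det D\ne0$, which makes $B$ nontrivial and $\alpha$ nonzero; the degenerate case $n=1$ is immediate from $D=(d)\ne 0$ together with $F=K$ and $\alpha=d$.
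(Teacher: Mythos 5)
Your proof is correct. The paper itself gives no argument for this lemma, only a citation to Scharlau and Waterhouse, and what you have written is essentially the standard argument from those sources: the self-adjointness identity $M^tD=DM$ for multiplication operators with respect to the form $\Tr_{F/K}(\alpha xy)$ in one direction, and in the other the $F$-module structure on $K^n$ via $\theta\mapsto AD$ together with $F$-associativity of the pulled-back form and non-degeneracy of the trace pairing to produce $\alpha$ (where, as a small simplification, $\alpha\neq 0$ already follows from $D\neq 0$ and $\psi$ being a bijection, without appealing to $p(0)\neq 0$).
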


\begin{proof}
See e.g. \cite{Scharlau1987,Waterhouse1986}.
\end{proof}

Let $D = {\rm diag}( d_1 ,\ldots,d_n)$ be a diagonal matrix with
$d_i\neq 0$ over some field. Let $T = (t_{ij})$ be a generic
$n\times n$ symmetric matrix, i.e., the only algebraic relations are
$t_{ij}=t_{ji}$. Let $p(\bft, x)$ be the characteristic polynomial
of $TD$, that is $p(\bft,x) = \det(xI - TD)$.

\begin{lemma}\label{lem:GaloisGroup}
Let $L$ be a field such that $d_i\in L$, $i=1\nek n$. Then
$p(\bft,x)$ is a separable irreducible polynomial over $L(\bft)$ and
the Galois group of $p(\bft,x)$ over $L(\bft)$ is the symmetric
group $S_n$.
\end{lemma}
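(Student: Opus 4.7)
The plan is to establish separability separately and then to prove both irreducibility and the identification of the Galois group by a joint induction on $n$, using a block-matrix decomposition together with a specialization argument.

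Separability is handled by a standard specialization. The discriminant $\Delta(\bft)\in L[\bft]$ of $p(\bft,x)$ with respect to $x$ is nonzero: specializing $t_{ij}\mapsto 0$ for $i\ne j$ and $t_{ii}\mapsto s_i$ for fresh variables $s_i$ turns $TD$ into $\diag(s_1d_1,\ldots,s_nd_n)$, so $p$ becomes $\prod_i(x-s_id_i)$, whose discriminant $\prod_{i<j}(s_id_i-s_jd_j)^2$ is nonzero because the $d_i$ are nonzero and the $s_i$ are independent. Hence $\Delta(\bft)\ne 0$ and $p$ is separable over $L(\bft)$.

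For irreducibility and the Galois group, I induct on $n$, the case $n=1$ being trivial. Decompose $T$ into blocks with $(n-1)\times(n-1)$ symmetric submatrix $T'$, last column $\mathbf{v}=(t_{1n},\ldots,t_{n-1,n})^\top$, last row $\mathbf{v}^\top$, and $(n,n)$-entry $t_{nn}$; let $D'=\diag(d_1,\ldots,d_{n-1})$ and $p'(\bft',x):=\det(xI-T'D')$. The Schur complement gives
\[
p(\bft,x)=(x-t_{nn}d_n)\,p'(\bft',x)-d_n\,\mathbf{v}^\top D'\,\mathrm{adj}(xI-T'D')\,\mathbf{v}.
\]
The right-hand side is linear in $t_{nn}$ with leading coefficient $-d_n\,p'(\bft',x)$, which by the inductive hypothesis is irreducible over $L(\bft')$. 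A Gauss' Lemma argument---matching coefficients of $t_{nn}$ in any hypothetical factorization and using that the remainder $\mathbf{v}^\top D'\,\mathrm{adj}(xI-T'D')\,\mathbf{v}$ has strictly smaller $x$-degree than $p'$ and genuinely depends on the fresh variables $\mathbf{v}$---then forces $p$ to be irreducible over $L(\bft)$. For the Galois group, specialize $\mathbf{v}\mapsto \mathbf{0}$: then $TD$ becomes block-diagonal and $p$ becomes the separable polynomial $(x-t_{nn}d_n)p'(\bft',x)$ over $L(\bft',t_{nn})$, whose Galois group equals that of $p'$ over $L(\bft')$, namely $S_{n-1}$ by induction. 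The classical specialization principle for Galois groups of monic separable polynomials embeds this $S_{n-1}$ into the generic Galois group $G$ of $p$ as the stabilizer of the root of $p$ that specializes to $t_{nn}d_n$. Combined with the transitivity of $G$ (immediate from the just-proven irreducibility), this yields $|G|\geq n\cdot(n-1)!=n!$, forcing $G=S_n$.

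The main obstacle I anticipate is the Gauss' Lemma step for irreducibility: one has to rule out the potentially nontrivial factorization in which one factor is proportional to $p'(\bft',x)$, which requires verifying that the remainder $\mathbf{v}^\top D'\,\mathrm{adj}(xI-T'D')\,\mathbf{v}$ is not divisible by $p'$ in $L[\bft][x]$. This follows from the $x$-degree inequality (degree $\le n-2$ on the right, versus $n-1$ for $p'$) together with the genuine $\mathbf{v}$-dependence of the remainder, but the argument must be spelled out with care.
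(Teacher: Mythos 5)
Your proposal is correct, and while its overall skeleton matches the paper's (induction on $n$, a block specialization, the specialization--embedding of Galois groups, and transitivity from irreducibility to conclude $G=S_n$), the irreducibility step is genuinely different. The paper treats $n=2$ separately by an explicit discriminant computation and, for $n\geq 3$, reduces modulo \emph{two} ideals, $\afrak_1=(t_{12},\ldots,t_{1n})$ and $\afrak_2=(t_{1n},\ldots,t_{(n-1)n})$: each reduction factors as a linear polynomial times an irreducible factor of degree $n-1$ (by induction), so a putative factor $f$ with $\deg_x f\leq\deg_x g$ would satisfy $f\equiv x-d_1t_{11}\pmod{\afrak_1}$ and $f\equiv x-d_nt_{nn}\pmod{\afrak_2}$, forcing $d_1t_{11}-d_nt_{nn}\in\afrak_1+\afrak_2$, a contradiction. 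You instead use a single decomposition: the Schur-complement identity exhibits $p$ as linear in $t_{nn}$ with leading coefficient $-d_np'$, and irreducibility follows from Gauss's lemma once the two $t_{nn}$-coefficients are coprime in $L[\bft',\mathbf{v},x]$, which (since $p'$ is irreducible there by induction and Gauss, and does not involve $\mathbf{v}$) reduces exactly to $p'\nmid \mathbf{v}^\top D'\,\mathrm{adj}(xI-T'D')\,\mathbf{v}$; this holds because that remainder has $x$-degree at most $n-2$ and is nonzero --- e.g.\ the coefficient of $t_{1n}^2$ in it is $d_1$ times a monic principal minor of degree $n-2$, which is the concrete fact you should record where you wrote ``genuinely depends on $\mathbf{v}$''. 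What your route buys: it handles $n=2$ uniformly inside the induction (no separate discriminant computation) and needs only one specialization for irreducibility, at the price of the adjugate bookkeeping; the paper's route avoids the Schur complement entirely but needs two reductions and the $n\geq3$ hypothesis. Your Galois-group step (specializing $\mathbf{v}\mapsto 0$, noting the splitting field of $p'$ stays Galois with group $S_{n-1}$ over the ground field enlarged by the transcendental $t_{nn}$, embedding via the specialization principle into the stabilizer of one root, and invoking transitivity) is essentially the paper's argument with $\afrak_2$ in place of $\afrak_1$, and is fine; note only that the embedding lands \emph{in} the stabilizer rather than being equal to it, which is all you use.
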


\begin{proof}
The assertion is trivial for $n=1$. For $n=2$ the polynomial is
separable and irreducible (and the Galois group is $S_2\cong \mathbb
Z/2\mathbb Z$) if and only if the discriminant $\Delta$ is not in
the field $L(\bft)$. We can write explicitly $\Delta^2 =
d_1^2t_{11}^2 +d_2^2t_{22}^2+4d_1d_2 t_{12}^2-2d_1d_2 t_{11}t_{22}$
which is not a square in $L(\bft)$. Let $n\geq 3$.
%We first show that $p(\bft,x)$ is irreducible and separable.
%Assume that
%$p(\bft,x)$ is reducible in $L[\bft,x]$, namely
%\begin{equation}\label{eq:decomposition}
%p(\bft,x) = f(\bft,x) g(\bft,x),
%\end{equation}
%where $f$ and $g$ are monic in $x$ and $1\leq \deg_x f \leq \deg_x g$.
Consider the ideals $\afrak_1 = (t_{1 2},\ldots, t_{1 n})$ and
$\afrak_2 = (t_{1n},\ldots, t_{(n-1)n})$ which correspond to the
respective substitutions $t_{1 2}=\cdots= t_{1 n} = 0$ and
$t_{1n}=\ldots=t_{(n-1)n} = 0$. Let $p_1,p_2$
be the reduction of $p$ modulo $\afrak_1,\afrak_2$
respectively.
%Let $p_1,f_1,g_1$ and $p_2,f_2,g_2$
%be the reduction of $p,f,g$ modulo $\afrak_1$ and $\afrak_2$
%respectively.
As $TD \equiv \left(
\begin{array}{c|ccc}
d_1t_{11}  &       &   &  \\
\hline
 & d_2t_{22} &  \cdots       & d_nt_{2n}\\
 & \vdots      &    & \vdots  \\
 & d_2t_{2n} &  \cdots       & d_nt_{nn}
\end{array}\right) \mod \afrak_1$ the polynomial $p_1$ decomposes
as $p_1(\bft,x) = (x - d_1t_{11}) h(\bft,x)$ in $L[\bft,x]/\afrak_1
= L[t_{1,1},t_{ij},x\mid j\geq i\geq 2]$. By induction, $h$, which
is the characteristic polynomial of the lower block, is a separable
irreducible polynomial with Galois group $S_{n-1}$ over
$L(t_{i,j}\mid j\geq i\geq 2)$. In particular $p(\bft,x)$ is also
separable.

Now assume that $p(\bft,x)$ is reducible in $L[\bft,x]$, namely
\begin{equation*}\label{eq:decomposition}
p(\bft,x) = f(\bft,x) g(\bft,x),
\end{equation*}
where $f$ and $g$ are monic in $x$ and $1\leq \deg_x f \leq \deg_x
g$. Then the irreducibility of $h$ implies that $f \equiv x -
d_1t_{11}\pmod{\afrak_1}$. A similar argument (for
$p\mod{\afrak_2}$) implies $f \equiv x - d_nt_{nn}\pmod{\afrak_2}$.
Consequently, we get that
\[
f(\bft,x) - (x - d_1t_{11}) \in \afrak_1 \quad \mbox{and}\quad
f(\bft,x) - (x - d_nt_{nn}) \in \afrak_2 ,
\]
hence $d_1t_{11} - d_nt_{nn}\in \afrak_1 + \afrak_2$, a
contradiction.

Finally, we calculate the Galois group $G$ of $p(\bft,x)$ over
$L(\bft)$: As $p_1 = (x-d_{1}t_{11}) h(\bft,x)$ in
$L[\bft,x]/\afrak_1$ and the polynomials are separable, there exists
a bijection between the roots of $p$ and the roots of $p_1$. Such a
bijection induces an embedding of the Galois group of $h$ over
$L(t_{ij}\mid j\geq i\geq 2)$ into $G$ via the action on the
respective roots (c.f., \cite[Lemma 6.1.4]{FriedJarden2005}). That
is $S_{n-1}\leq G\leq S_n$ under a suitable labeling of the roots.
As $p(\bft,x)$ is separable and irreducible, $G$ is transitive, and
hence $G=S_n$. Indeed, for any $\sigma\in S_n$ there exists $\tau\in
G$ such that $\sigma(n)=\tau(n)$, so $\sigma \in \tau
S_{n-1}\subseteq G$.
\end{proof}

The next lemma is a weak Hilbert's Irreducibility Theorem for PAC extensions.
\begin{lemma}\label{prop:weakHIT}
Let $M/K$ be a PAC extension, let $f(t_1,\ldots, t_s,x)\in
M[t_1,\ldots, t_s,x]$ be a separable polynomial of degree $n$ in
$x$, and let $\tilde M$ be an algebraic closure of $M$. Assume that
the Galois group of $f(t_1,\ldots, t_s,x)$ over $\tilde
M(t_1,\ldots, t_s)$ is the symmetric group $S_n$. Then there exist
infinitely many $(\alpha_1,\ldots,\alpha_s)\in K^s$ such that
$f(\alpha_1,\ldots, \alpha_s,x)$ is irreducible over $M$, provided
that $M$ has a separable extension of degree $n$.
\end{lemma}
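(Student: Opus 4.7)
The plan is to mimic the classical proof of Hilbert's Irreducibility Theorem, with the PAC extension property playing the role of Hilbertianity, by setting up an absolutely irreducible auxiliary variety over $M$ whose $K$-points parameterize the desired irreducible specializations. The twist that produces this variety is built from the given degree-$n$ extension $M'/M$, and the hypothesis that the Galois group is the full symmetric group $S_n$ is what keeps the twist absolutely irreducible.

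First I would reduce to the one-variable case $s=1$. By substituting $t_i \mapsto a_i + b_i u$ for generic $(a_i,b_i) \in K^{2s}$ and passing to the restricted polynomial in $(u,x)$, one preserves the $S_n$ Galois group over $\tilde M(u)$ outside a proper Zariski-closed ``bad locus'' in $\bbA^{2s}$. Such generic parameters can be chosen in $K$ since $K$ is infinite (a PAC extension admitting a nontrivial separable extension of $M$ forces this).

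Next, fix a primitive element $\beta$ of $M'/M$ with conjugates $\beta_1 \nek \beta_n \in \tilde M$, take the Galois closure $\tilde M'/M$, and set $H = \gal(\tilde M'/M)$; the action on $\{\beta_i\}$ embeds $H$ as a transitive subgroup of $S_n$. Let $\tilde E$ be the splitting field of $f(u,x)$ over $M(u)$. By hypothesis $\gal(\tilde E/\tilde M(u)) = S_n$, so $\tilde E/M(u)$ is a regular Galois $S_n$-extension. I would then form the twisted cover $W \to \bbA^1_M$ by twisting the $S_n$-Galois cover $\mathrm{Spec}(\tilde E) \to \bbA^1_{M(u)}$ by the $H$-torsor $\mathrm{Spec}(\tilde M') \to \mathrm{Spec}(M)$ via the inclusion $H \hookrightarrow S_n$; concretely, $W = \bigl(\mathrm{Spec}(\tilde E) \times_M \mathrm{Spec}(\tilde M')\bigr)/H$ under the diagonal action. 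An $M$-point of $W$ above $\alpha \in K$ is then the same as an $M$-algebra embedding $M' \hookrightarrow M[x]/(f(\alpha,x))$, and since $\deg_x f = n = [M':M]$ such an embedding exists if and only if $f(\alpha,x)$ is $M$-irreducible with root field $M$-isomorphic to $M'$.

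Once $W$ is shown to be absolutely irreducible, applying the PAC extension property to a bivariate defining equation for $W$ produces infinitely many $(\alpha, y_0) \in K \times M$ on $W$, hence infinitely many $\alpha \in K$ for which $f(\alpha,x)$ is $M$-irreducible; undoing the substitution of the first step returns the required tuples $(\alpha_1 \nek \alpha_s) \in K^s$. The main obstacle is verifying absolute irreducibility of $W$: this reduces to showing that the Galois group of $f(u,x)$ over $\tilde M'(u)$ is still $S_n$, equivalently that $\tilde E$ and $M(u) \cdot \tilde M'$ are linearly disjoint over $M(u)$. That disjointness follows from the regularity of $\tilde E/M(u)$, which is exactly the hypothesis $\gal(\tilde E/\tilde M(u)) = S_n$, so the construction closes up.
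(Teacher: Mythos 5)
Note first that the paper does not prove this lemma at all: it simply cites \cite[Cor.~2]{Bary-SorokerDTRF} and remarks that the proof given there for $s=1$ extends to $s>1$. Your sketch is, in substance, a reconstruction of the standard field-crossing/twisting argument behind that cited result, so I would call it the same approach made self-contained rather than a different route: you twist the regular $S_n$-cover attached to the splitting field $\tilde E$ by the $H$-torsor $\mathrm{Spec}(\tilde M')$, observe that regularity (equivalently, the hypothesis that the group over $\tilde M(t_1\nek t_s)$ is all of $S_n$) makes $\gal(\tilde E\tilde M'/M(u))\cong S_n\times H$ so that the constant field of the quotient $(\tilde E\otimes_M\tilde M')^{H}$ is $(\tilde M')^{H}=M$ (note this uses, trivially but necessarily, that the diagonal surjects onto $\gal(\tilde M'/M)$), and then feed a plane model of $W$ to the PAC-extension property. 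Two points where you compress or diverge deserve attention, though neither is a fatal gap: (i) your reduction to $s=1$ by restricting to a line $t_i=a_i+b_iu$ with $(a_i,b_i)\in K^{2s}$ needs a Bertini--Noether type justification that the $S_n$ Galois group over $\tilde M(u)$ survives restriction to a general line (true because $\tilde M$ is algebraically closed and $K$ is infinite, but it should be argued or referenced), whereas the paper's stance is that the one-variable proof of \cite{Bary-SorokerDTRF} goes through verbatim in several variables; (ii) the asserted equivalence between an $M$-point of $W$ above $\alpha$ and an embedding $M'\hookrightarrow M[x]/(f(\alpha,x))$ is only valid for $\alpha$ outside a finite bad set (vanishing leading coefficient or discriminant of $f(\alpha,\cdot)$, non-\'etale fibres, and points of the plane model that do not lift to the normalization $W$); this is harmless since the PAC property produces infinitely many $\alpha\in K$ with distinct first coordinates, but in a complete write-up you must discard these finitely many exceptions explicitly, and only the implication ``point $\Rightarrow$ irreducible specialization'' is actually needed.
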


The proof appears in \cite[Cor.
2]{Bary-SorokerDTRF} (the proof is given for $s=1$, but it is easy to check that the same proof works for $s>1$).

\begin{proof}[Proof of Theorem~\ref{thm:main}]
Let $M/K$ be a PAC extension of characteristic $\neq 2$ and let $Q$
be a non-degenerate quadratic form over $K$. Choose a basis in which
$Q$ is diagonal and denote by
$D$ % = \diag(d_1,\ldots, d_n)$
the corresponding diagonal matrix. By Lemma \ref{lem:Her}, it
suffices to find a symmetric matrix $A$ (with coefficients in $K$)
such that $AD$ has an irreducible characteristic polynomial. For
that, take $T$ to be the generic symmetric matrix with indeterminate
coefficients. By Lemma~\ref{lem:GaloisGroup} (with $L=\tilde M$) the
characteristic polynomial, $p(\bft,x)$, of $T D$ satisfies the
condition of Lemma~\ref{prop:weakHIT}. Thus if $M$ has a separable
extension of degree $n$, we can specialize $\bft \mapsto \bfa\in
K^{n^2}$ such that $p(\bfa,x)$ is irreducible. We thus get, for the
specialized matrix $A = (a_{ij})$, that $AD$ has an irreducible
characteristic polynomial.
\end{proof}

\section{Fields with PAC extensions}\label{sec:3}
In order to deduce Theorems \ref{thm:sol} and \ref{thm:pi} from
Theorem \ref{thm:main}, we need to show that the fields in question
have a PAC extension having suitable separable extensions. We start
by citing some results regarding PAC extensions that we will need.

Let $K_0$ be a field and $e\geq 1$ an integer. For
$\bfsigma=(\sigma_1,\ldots,\sigma_e)\in \gal(K_0)^e$, we denote by
$\left<\bfsigma\right> = \left<\sigma_1,\ldots,\sigma_e\right>$ the
closed subgroup of $\gal(K_0)$ generated by the coordinates of
$\bfsigma$, and by $K_{0s}(\bfsigma)$ the fixed field of
$\left<\bfsigma\right>$ in a fixed separable closure $K_{0s}$ of
$K_{0}$. We recall that the absolute Galois group of $K_{0}$ is
profinite (in particular compact), and hence equipped with a
probability Haar measure.

\begin{proposition}\label{thm:cite}
Let $K_0$ be a countable Hilbertian field and $K/K_0$ an algebraic
extension.
\renewcommand{\labelenumi}{\alph{enumi}.}
\begin{enumerate}
\item
For almost all $\bfsigma\in \gal(K_{0})^e$ the extension
$K_{0s}(\bfsigma)/K_0$ is a PAC extension  and $\left<
\bfsigma\right>$ is a free profinite group of rank $e$
({\cite[Proposition 3.1]{JardenRazon1994}} and \cite[Theorem
18.5.6]{FriedJarden2005}).
\item
If $M_0/K_0$ is a PAC extension, then so is $M_0K/K$
({\cite[Corollary 2.5]{JardenRazon1994}}).
%\item \label{lem:free}
%For almost all $\bfsigma\in \gal(K_{0})^e$ $\left< \bfsigma\right>$
%is a free profinite group of rank $e$({\cite[Theorem
%18.5.6]{FriedJarden2005}}).
\end{enumerate}
\end{proposition}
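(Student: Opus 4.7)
The plan is to treat the two parts of the proposition separately, since each is cited from a different source and relies on different techniques. Both parts are standard in the theory of PAC extensions developed by Jarden and Razon, and my approach mirrors the arguments in the cited references.

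For part (a), I would first handle the freeness of $\langle\bfsigma\rangle$ as a profinite group of rank $e$, which is a classical measure-theoretic statement on the absolute Galois group of a countable Hilbertian field (due to Jarden). The harder part is the PAC extension property. Given an absolutely irreducible polynomial $f(X,Y)\in K_{0s}(\bfsigma)[X,Y]$ separable in $Y$ of degree $n$, I would descend: the coefficients lie in a finite subextension $L/K_0$ contained in $K_{0s}(\bfsigma)$. Hilbert's irreducibility theorem applied over $L$ yields infinitely many $x\in K_0$ for which the specialization $f(x,Y)\in L[Y]$ remains absolutely irreducible of degree $n$. The task then reduces to showing that for almost all $\bfsigma$, such a polynomial has a root in $K_{0s}(\bfsigma)$; this is a measure computation on $\gal(K_0)^e$ using that the event ``some specific root of $f(x,Y)$ is fixed by $\bfsigma$'' has positive measure determined by $1/n$. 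Countability of the relevant data (polynomials $f$, specializations $x$) lets one intersect countably many full-measure events to obtain a single full-measure set of $\bfsigma$ that works simultaneously.

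For part (b), given $f(X,Y)\in M_0K[X,Y]$ absolutely irreducible and separable in $Y$, the coefficients lie in $M_0 L$ for some finite subextension $L/K_0$ of $K/K_0$. I would form a norm-type polynomial $F(X,Y)\in M_0[X,Y]$, essentially by taking a product over the $\gal(L/K_0)$-conjugates of the coefficients of $f$, designed so that $K_0$-solutions of $F$ lift to $L$-solutions of $f$. The first sub-step is to verify that $F$ is still absolutely irreducible and separable in $Y$ after this descent; then the PAC extension property of $M_0/K_0$ yields infinitely many $(x,y)\in K_0\times M_0$ with $F(x,y)=0$, and translating back via the norm construction produces infinitely many solutions of $f$ in $K\times M_0K$.

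The main obstacle in both parts is the interplay between absolute irreducibility and field descent: in part (a), ensuring that a single full-measure set of $\bfsigma$ handles all polynomials uniformly, rather than one measure-zero bad set per polynomial (this is exactly where countability of $K_0$ enters); in part (b), arranging the norm/resultant construction so that absolute irreducibility survives the descent to $M_0$. Once these technical points are handled, everything else is a straightforward application of Hilbert's irreducibility and the definition of a PAC extension.
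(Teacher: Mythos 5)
The paper does not prove this proposition at all: it is quoted from the literature (part (a) from Jarden--Razon, Proposition~3.1, together with Fried--Jarden, Theorem~18.5.6; part (b) from Jarden--Razon, Corollary~2.5), so your sketch has to be measured against those proofs. For part (a) your skeleton is right (descend the coefficients of $f$ to a finite subextension $L/K_0$ with $L\subseteq K_{0s}(\bfsigma)$, specialize $X$ inside $K_0$ by Hilbertianity, then a measure argument in $\gal(K_0)^e$, then a countable union over the pairs $(L,f)$), but the central measure-theoretic step is wrong as stated. For fixed $L$, $f$ and a fixed specialization $a\in K_0$, the event ``$\bfsigma$ fixes a root of $f(a,Y)$'' is only a positive-measure event (of size roughly $1/(n[L:K_0])^e$), not a full-measure one, so there is nothing to ``intersect''. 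To conclude that almost all $\bfsigma$ in the positive-measure set $\{\bfsigma:\ L\subseteq K_{0s}(\bfsigma)\}$ pick up infinitely many roots, one must choose the specializations $a_1,a_2,\dots$ inductively, applying Hilbertianity over the compositum of the splitting fields already produced so that the new splitting field is linearly disjoint from it; this makes the events (essentially) independent, and only then does the Borel--Cantelli lemma give the almost-sure statement. This independence-plus-Borel--Cantelli argument is the heart of Jarden--Razon's Proposition~3.1 and is absent from your proposal. (Smaller points: you need the form of Hilbert's irreducibility theorem in which Hilbert subsets of the finite extension $L$ meet $K_0$, and a univariate specialization $f(a,Y)$ cannot be ``absolutely irreducible''---what is needed is separability together with control of the splitting fields.)

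In part (b) the construction breaks at exactly the sub-step you flag. If $F\in M_0[X,Y]$ is literally a product of the conjugates $f^{\tau}$ of $f$, it is reducible by construction, hence not absolutely irreducible, and the PAC property of $M_0/K_0$ cannot be applied to it; moreover, a zero $(x,y)\in K_0\times M_0$ of such a product is a zero of some conjugate $f^{\tau}$, and transporting it back by an automorphism moves $M_0$, so the resulting root of $f(x,Y)$ need not lie in $M_0K$. A correct descent uses Weil restriction of scalars from $M_0L$ to $M_0$ (equivalently a norm form in $[M_0L:M_0]$ new $Y$-variables), which yields a higher-dimensional absolutely irreducible variety rather than a plane curve; one then needs the characterization of PAC extensions in terms of absolutely irreducible varieties equipped with dominant separable morphisms to affine space---this is precisely the framework in which Jarden--Razon prove their Corollary~2.5---or else a further projection argument back down to a curve. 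So both parts of your proposal have genuine gaps; since the paper itself simply cites these results, the sound alternative is to do the same rather than to reprove them by the shortcuts sketched here.
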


We shall also use the following simple group theoretic lemma.

\begin{lemma}\label{lem:grp}
Let $N\leq N_0\leq G$ be profinite groups such that $N$ is normal in
$G$. Let $H$ be a quotient of $G$ such that $H$ and $G/N$ have no
common nontrivial quotients. Then, $H$ is a quotient of $N_0$. In
particular, if $H$ has an open subgroup of index $n$, so does $N_0$.
\end{lemma}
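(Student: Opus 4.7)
The plan is to study the image of $N$ under the given quotient map $\pi\colon G\twoheadrightarrow H$ and use the no-common-quotient hypothesis to force this image to be everything.

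First I would set $\bar N = \pi(N)$. Since $N$ is closed in $G$ and $\pi$ is continuous with compact source, $\bar N$ is a closed subgroup of $H$, and it is normal in $H$ because $N$ is normal in $G$ and $\pi$ is surjective. Then $H/\bar N$ is a (continuous) quotient of $H$. At the same time, $\pi$ descends to a continuous surjection $G/N\twoheadrightarrow H/\bar N$ sending $gN\mapsto \pi(g)\bar N$, well-defined precisely because $\pi(N)=\bar N$. So $H/\bar N$ is simultaneously a quotient of $H$ and of $G/N$.

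By the hypothesis that $H$ and $G/N$ have no common nontrivial quotient, $H/\bar N$ must be trivial, i.e., $\pi(N)=\bar N=H$. Since $N\subseteq N_0$, this already forces $\pi(N_0)=H$, so the restriction $\pi|_{N_0}\colon N_0\twoheadrightarrow H$ exhibits $H$ as a quotient of $N_0$.

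For the final sentence, if $U\leq H$ is an open subgroup of index $n$, then $(\pi|_{N_0})^{-1}(U)$ is an open subgroup of $N_0$, and the surjectivity of $\pi|_{N_0}$ gives a bijection $N_0/(\pi|_{N_0})^{-1}(U)\to H/U$ of coset spaces, so this preimage has index $n$ in $N_0$. No step here looks serious; the whole content is the observation that $H/\pi(N)$ is a common quotient of $H$ and $G/N$, so the only thing to be careful about is to invoke continuity/closedness correctly in the profinite category, which is automatic from compactness of $N$.
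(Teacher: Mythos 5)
Your proof is correct and is essentially the paper's argument in a different dress: the paper works with the kernel $U$ of $G\twoheadrightarrow H$ and observes that $G/NU$ is a common quotient of $H$ and $G/N$, forcing $G=NU$, which is exactly your statement that $H/\pi(N)$ is trivial, i.e.\ $\pi(N)=H$. The conclusion for $N_0$ and the index-$n$ subgroup is handled the same way in both.
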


\begin{proof}
Let $U\lhd G$ such that $G/U= H$. Since $G/NU$ is a common quotient
of $G/U$ and $G/N$, we get that $G/NU=1$, so $G=NU$. Therefore
 also $N_0U=G$, and hence $N_0/N_0\cap U \cong N_0U/U = G/U = H$.
\end{proof}

\subsection{Prosolvable extensions}
%Theorem~\ref{thm:sol} follows immediately from
%Theorem~\ref{thm:main} and the following proposition (cf.
%\cite[Corollary 3.7]{Bary-SorokerDTRF}).
The following proposition shows that prosolvable extensions of a
countable Hilbertian field have many PAC extensions (cf.
\cite[Corollary 3.7]{Bary-SorokerDTRF}).
\begin{proposition}\label{prop:sol}
Let $K$ be a prosolvable extension of a countable Hilbertian field
$K_0$ and $e\geq 2$. Then for almost all $\bfsigma\in \gal(K_0)^e$
the field $M=KK_{0s}(\bfsigma)$ is a PAC extension of $K$ and it has
a separable extension of every degree $>4$.
\end{proposition}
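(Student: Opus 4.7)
The plan splits into two essentially independent pieces: the PAC conclusion, which follows directly from Proposition~\ref{thm:cite}, and the existence of separable extensions of degree $n>4$, which I would extract from $\gal(M)$ by applying Lemma~\ref{lem:grp} to the simple group $A_n$.

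For the first piece, I would fix $\bfsigma$ in the full-measure set supplied by Proposition~\ref{thm:cite}(a), so that $K_{0s}(\bfsigma)/K_0$ is a PAC extension and $S:=\langle\bfsigma\rangle$ is a free profinite group of rank $e\geq 2$. Proposition~\ref{thm:cite}(b), applied with $M_0=K_{0s}(\bfsigma)$, then immediately gives that $M=K_{0s}(\bfsigma)\cdot K$ is a PAC extension of $K$.

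For the second piece, let $K'$ denote the Galois closure of $K/K_0$. The hypothesis that $K/K_0$ is prosolvable propagates to $K'/K_0$, since $\gal(K'/K_0)$ is the inverse limit of the (solvable) Galois groups of the Galois closures of the finite subextensions of $K/K_0$. Set $G=\gal(K_0)$, let $V=\gal(K')$, which is normal in $G$, and let $U=\gal(K)\supseteq V$, so that $\gal(M)=U\cap S$ inside $G$. Since $V$ is normal in $G$, the subgroup $S\cap V$ is normal in $S$, and clearly $S\cap V\subseteq U\cap S$. The embedding $S/(S\cap V)\hookrightarrow G/V=\gal(K'/K_0)$ exhibits $S/(S\cap V)$ as a prosolvable group.

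Now fix $n\geq 5$. The group $A_n$ is generated by two elements, hence is a quotient of the free profinite group $S$ of rank $\geq 2$; and since $A_n$ is simple non-abelian, it shares no nontrivial quotient with the prosolvable group $S/(S\cap V)$. Lemma~\ref{lem:grp}, applied with (in its notation) $G\leftarrow S$, $N\leftarrow S\cap V$, $N_0\leftarrow U\cap S$, and $H\leftarrow A_n$, then yields that $A_n$ is a quotient of $\gal(M)=U\cap S$; pulling back a point stabilizer $A_{n-1}\leq A_n$ produces an open subgroup of index $n$, i.e., a separable extension of $M$ of degree $n$. The key point — and the reason the bound is $n>4$ — is the choice of $A_n$ rather than $S_n$: the latter would share the quotient $\mathbb{Z}/2$ with essentially every prosolvable group, so nonsolvability of the target alone is not enough; one needs non-abelian simplicity, forcing $n\geq 5$.
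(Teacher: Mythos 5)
Your proof is correct and takes essentially the same route as the paper: Proposition~\ref{thm:cite} handles the PAC part, and Lemma~\ref{lem:grp} is applied with $H=A_n$ (simple non-abelian for $n\geq 5$, two-generated, hence a quotient of the free group of rank $e\geq 2$) to the normal subgroup $S\cap V$ of $S$, which is exactly the paper's $N=\gal(\hat K K_{0s}(\bfsigma))\trianglelefteq G=\left<\bfsigma\right>$, with $N_0=\gal(M)$. The only cosmetic difference is that you phrase the setup via intersections inside $\gal(K_0)$ and spell out why the Galois closure of a prosolvable extension is prosolvable, which the paper asserts without proof.
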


\begin{proof}
Let $\hat K$ be the Galois closure of $K/K_0$, so $\gal(\hat K/K_0)$
is prosolvable. For almost all $\bfsigma\in \gal(K_0)^e$ the field
$M_0=K_{0s}(\sigma)$ is a PAC extension of $K_0$ and its absolute
Galois group $G=\left< \bfsigma \right>$ is a free profinite group
of rank $e$ (Proposition~\ref{thm:cite}a.). Fix such a $\bfsigma$
and write $M = KK_{0s}(\bfsigma)$. Then $M/K$ is PAC
(Proposition~\ref{thm:cite}b.). Let $N_0 = \gal(M)$ be the absolute
Galois group of $M$ and let $N = \gal(\hat KK_{0s}(\bfsigma))$. Then
$N\leq N_0\leq G$, $N$ is normal in $G$, and $G/N = \gal(\hat K
K_{0s}(\bfsigma)/K_0)$. The restriction map $G/N \to \gal(\hat
K/K_0)$ is an embedding, so $G/N$ is prosolvable.

Let $n>4$, we show that $M$ has a separable extension of degree $n$.
By Galois correspondence, it suffices to show that $N_0$ has an open
subgroup of index $n$. As $G$ is free of rank $\geq 2$ it has $A_n$
(the alternating group) as a quotient. $A_n$ and $G/N$ have no
nontrivial common quotients (as $G/N$ is prosolvable and $A_n$ is
simple). Now Lemma \ref{lem:grp} with $H=A_n$ implies that $N_0$ has
an open subgroup of index $n$ (since $(A_n:A_{n-1})=n$).
\end{proof}

\begin{proof}[Proof of Theorem \ref{thm:sol}]
Let $K$ be a prosolvable extension of a Hilbertian field $K_0$. In
case $K$ is countable, the assertion follows immediately from
Theorem~\ref{thm:main} and the above proposition. When $K$ is
uncountable, the assertion follows from the countable case by the
L\"owenheim-Skolem Theorem \cite[Proposition
7.4.2]{FriedJarden2005}.

Indeed, let $\mathcal L$ be the language of Rings together with a
unary predicate $P$.
%Consider the theory defined by the field axioms, i.e., a model for
%this theory is a field extension $k/k_0$, where $k_0$ corresponds to
%the predicate $P$.
By L\"owenheim-Skolem, there exists a countable elementary
substructure $E/E_0$ of the structure $K/K_0$, in particular,
$E/E_0$ is a field extension. We show that $E_0$ is Hilbertian and
that $E/E_0$ is prosolvable. This would imply that every
non-degenerate quadratic form over $E$ is isomorphic to a scaled
trace form. Since, for any fixed positive integer $n$, the statement
``All quadratic forms of dimension $n$ are isomorphic to scaled
trace forms" is elementary (by Lemma~\ref{lem:Her}), this would
conclude the proof.

To show that $E_0$ is Hilbertian, we need to show that for every
positive integer $n$, every irreducible polynomial $f(T,X)$ of
degree $n$ has an irreducible specialization. For any fixed $n$ this
is an elementary statement which is true in $K_0$ and hence also in
$E_0$. Next, we show that $E/E_0$ is separable algebraic and that
the Galois closure $\hat{E}$ of $E/E_0$ is linearly disjoint from
$K_0$ over $E_0$. Indeed, if $x\in E$, then $x\in K$. Hence there
exists an irreducible separable polynomial $f$ over $K_0$ satisfying
$f(x)=0$. The latter statement is elementary, so $x$ is separable
and algebraic over $E_0$. Now let $L$ be a finite separable
extension of $E_0$ and $f$ an irreducible generating polynomial of
$L/E_0$. Then $f$ generates $LK_0/K_0$ and is also irreducible over
$K_0$ (since it is elementary). Therefore $[L:E_0] = \deg f =
[LK_0:K_0]$, i.e., $L$ is linearly disjoint from $K_0$ over $E_0$.
As $L$ is an arbitrary finite separable extension, we get that
$E_{0s}$ is linearly disjoint from $K_0$ over $E_0$, and in
particular so is $\hat E$. Finally, let $\hat{K}$ be the Galois
closure of $K/K_0$. Then $\hat E K_0 \subseteq \hat K$, and
$\gal(\hat E/ E_0) \cong \gal(\hat E K_0/K_0)$ via restriction. As
$\gal(\hat K/K_0)$ is prosolvable, so is  $\gal(\hat E K_0/K_0)
\cong \gal(\hat K/K_0) / \gal(\hat K/\hat E K_0)$, and hence
$\gal(\hat E/ E_0)$ is prosolvable.
\end{proof}

\begin{remark}
Theorem~\ref{thm:sol} is valid in particular for the maximal
prosolvable extension $\mathbb{Q}_{\mathrm{sol}}$ of $\mathbb Q$.
However, $\mathbb{Q}_{\mathrm{sol}}$ has no quadratic extensions,
hence there is a unique non-degenerate quadratic form of a given
dimension (up to isomorphism). So in this maximal case the theorem
is obvious.
\end{remark}

Theorem~\ref{thm:sol} answers Question~\ref{que:main} positively for
solvable extensions and quadratic forms of dimension $n>4$. The
following proposition shows that for $n=2$ the answer is negative.
We do not know what happens in the gap $n=3,4$.

\begin{proposition} \label{prop:prosolvable}
There exists a prosolvable extension $K/\mathbb Q$ which has an
extension of degree $2$, but the non-degenerate quadratic form
$\langle 1,1\rangle$  is not isomorphic to a scaled trace form over
$K$.
\end{proposition}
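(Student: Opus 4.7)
The plan is to take $K = \mathbb{Q}^{\mathrm{sol}} \cap \mathbb{R}$, the maximal prosolvable extension of $\mathbb{Q}$ inside $\mathbb{R}$. Since $K \subseteq \mathbb{Q}^{\mathrm{sol}}$, any finite subextension of $K/\mathbb{Q}$ is also a finite subextension of $\mathbb{Q}^{\mathrm{sol}}/\mathbb{Q}$, so $K/\mathbb{Q}$ is prosolvable. Since $K \subseteq \mathbb{R}$, the element $-1$ is not a square in $K$, so $L := K(i)$ (with $i = \sqrt{-1}$) is a genuine quadratic extension of $K$, and the hypothesis that $K$ has a separable extension of degree $2$ is satisfied.

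The heart of the argument is to show that $K$ is \emph{Euclidean}: every positive element of $K$ is a square. Given $d \in K$ with $d > 0$, let $\hat F$ denote the Galois closure of $\mathbb{Q}(d)/\mathbb{Q}$, which is solvable by construction. Adjoining the square roots of all Galois conjugates of $d$ produces a Galois extension $\hat F'/\mathbb{Q}$ whose upper layer $\hat F'/\hat F$ is generated by square roots and so has an elementary abelian $2$-group as Galois group. Hence $\hat F'/\mathbb{Q}$ is solvable, which gives $\sqrt d \in \hat F' \cap \mathbb{R} \subseteq \mathbb{Q}^{\mathrm{sol}} \cap \mathbb{R} = K$. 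Being ordered (as a subfield of $\mathbb{R}$) and Euclidean, $K$ satisfies $K^\times / K^{\times 2} = \{1, -1\}$, and therefore $L = K(i)$ is the \emph{unique} quadratic extension of $K$.

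Any scaled trace form of dimension $2$ over $K$ is of the form $x \mapsto \Tr_{L/K}(\beta x^2)$ for some $\beta = p + qi \in L^\times$. A direct computation in the $K$-basis $\{1, i\}$ gives a Gram matrix with determinant $-4(p^2 + q^2) = -4\,N_{L/K}(\beta)$. Since $p^2 + q^2$ is a positive element of the Euclidean field $K$, it is a square, so the discriminant of this form, viewed in $K^\times/K^{\times 2} = \{1, -1\}$, equals $-1$. On the other hand, $\langle 1, 1\rangle$ has discriminant $1$, and $1 \neq -1$ in $K^\times/K^{\times 2}$. Hence no scaled trace form over $K$ is isomorphic to $\langle 1, 1\rangle$, proving the proposition. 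The only nonroutine step is the Euclideanness of $K$; once that is in hand, the discriminant obstruction is a direct analogue of the $\mathbb{R}/\mathbb{C}$ example recorded earlier.
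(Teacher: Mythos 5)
Your proof is correct and rests on the same example as the paper, namely $K=\mathbb{Q}_{\mathrm{sol}}\cap\mathbb{R}$, but you finish it by a genuinely different argument. The paper identifies the unique quadratic extension of $K$ as $\mathbb{Q}_{\mathrm{sol}}$ itself (any quadratic extension of $K$ is prosolvable over $\mathbb{Q}$, hence contained in, hence equal to, $\mathbb{Q}_{\mathrm{sol}}$, which has index $2$ over $K$), and then rules out $\langle 1,1\rangle$ by isotropy: with $x=\sqrt{i/\alpha}\in\mathbb{Q}_{\mathrm{sol}}$ one gets $\Tr_{\mathbb{Q}_{\mathrm{sol}}/K}(\alpha x^2)=\Tr(i)=0$, while $\langle 1,1\rangle$ is anisotropic because $K$ is real. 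You instead prove that $K$ is Euclidean, deduce $K^{\times}/K^{\times 2}=\{\pm 1\}$, hence that $K(i)$ is the only quadratic extension, and then separate the forms by the discriminant, $-4(p^2+q^2)\equiv -1$ versus $1$ modulo squares; your Gram matrix computation is correct (and $p^2+q^2\neq 0$ since $K\subseteq\mathbb{R}$). The two routes are of comparable depth: your Euclidean lemma (solvability is preserved when one adjoins the square roots of all conjugates) is essentially the same fact the paper uses implicitly to justify $\sqrt{i/\alpha}\in\mathbb{Q}_{\mathrm{sol}}$. What your version buys is an explicit invariant (the discriminant) and the stronger intermediate statement that $K$ has exactly two square classes; what the paper's version buys is brevity, since the isotropy argument transfers verbatim from the $\mathbb{C}/\mathbb{R}$ remark in the introduction and needs no Gram-matrix computation and no determination of $K^{\times}/K^{\times 2}$.
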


\begin{proof}
Fix an embedding of $\mathbb Q$ in $\mathbb C$. Complex conjugation
acts nontrivially on $\mathbb{Q}_{\mathrm{sol}}$. Let $K$ be its
fixed field, i.e., $K = \mathbb{Q}_{\mathrm{sol}} \cap \mathbb R$.
Then $[\mathbb{Q}_{\mathrm{sol}}:K]=2$. Also if $L/K$ is an
extension of degree $2$, then $L$ is also a prosolvable extension of
$\mathbb Q$, so $L \subseteq \mathbb{Q}_{\mathrm{sol}}$, which
implies that $L=\mathbb{Q}_{\mathrm{sol}}$.  Thus
$\mathbb{Q}_{\mathrm{sol}}$ is the unique extension of degree $2$
over $K$. We now proceed as in $\mathbb C/\mathbb R$. Any scaled
trace form $x\mapsto \Tr_{\mathbb{Q}_{\mathrm{sol}}/K}(\alpha x^2)$
is isotropic (since $x=\sqrt{i/\alpha}\in
\mathbb{Q}_{\mathrm{sol}}$). The assertion follows since $\langle
1,1\rangle$ is not isotropic over $K$ (recall that $K$ is real).
\underline{}\end{proof}

\subsection{Prime-to-$p$ extensions}
Let $p, m,k$ be positive integers such that $p$ is prime, $p\nmid
m$, and $p\mid \phi(m)$. Here $\phi$ is Euler's totient function.
Consider the semidirect product $H = \bbZ/m\bbZ \rtimes \bbZ/p^k
\bbZ$ of all pairs $(a,x)$, $a\in \bbZ/m\bbZ$ and $x\in \bbZ/p^k
\bbZ$ with multiplication given by
\[
(a,x)(b,y) = (a + \alpha^x b, x+y),
\]
where $\alpha \in (\bbZ/m\bbZ)^*$ is a fixed element of
(multiplicative) order $p$. In particular
\[
(a,x)^n =(a(1+\alpha+\alpha^2 + \cdots +\alpha^{n}),nx) =
\left(\frac{a(1-\alpha^{n+1})}{1-\alpha},nx\right).
\]
We embed $\bbZ/m \bbZ$ and $\bbZ/p^k \bbZ$ in $H$ in the natural
way.

\begin{lemma}
\renewcommand{\labelenumi}{\alph{enumi}.}
Let $p,m,k$, and $H = \bbZ/m\bbZ \rtimes \bbZ/p^k \bbZ$ be as above.
Then
\begin{enumerate}\label{lem:semi}
\item
$H$ is generated by its $p$-sylow subgroups.
\item
The only prime-to-$p$ quotient of $H$ is trivial.
\item
If $n\mid m$, then there exist subgroups $H_0$ and $H_1$ of $H$ of
respective indices $p^k n$ and $n$.
\end{enumerate}
\end{lemma}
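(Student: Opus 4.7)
The plan is to treat (a) and (b) in tandem, since they are essentially the same statement, and then handle (c) by direct construction.

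Let $S$ denote the subgroup of $H$ generated by all Sylow $p$-subgroups. Since all $p$-Sylows are conjugate, $S$ equals the normal closure of any one of them and so is normal in $H$; the quotient $H/S$ has no nontrivial element of order $p$, and conversely any quotient of $H$ of prime-to-$p$ order must annihilate every $p$-element, hence factors through $S$. Thus $S$ is the smallest normal subgroup of $H$ with prime-to-$p$ quotient, and (a) and (b) each amount to the statement $S = H$. To prove this, I fix the standard Sylow $P = \{(0,x) : x \in \bbZ/p^k\bbZ\}$ and compute the conjugate of $(0, x)$ by $(a, 0)$, which by direct calculation equals $(a(1-\alpha^x), x)$. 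Consequently $S$ contains $(a(1-\alpha), 1)$ for every $a \in \bbZ/m\bbZ$, and multiplying by $(0, 1)^{-1} \in P$ produces $(a(1-\alpha), 0) \in S$ for every such $a$. The content then reduces to verifying that the cyclic subgroup $\langle 1-\alpha\rangle$ of $\bbZ/m\bbZ$ is the whole group---equivalently, that $1-\alpha$ is coprime to $m$. Granted this, $S \supseteq \bbZ/m\bbZ \times \{0\}$ and $S \supseteq P$, so $S = H$.

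For (c) the subgroups are explicit. Every additive subgroup $n\bbZ/m\bbZ \leq \bbZ/m\bbZ$ is preserved under multiplication by the unit $\alpha^x$, so the semidirect product $H_1 := n\bbZ/m\bbZ \rtimes \bbZ/p^k\bbZ$ sits inside $H$ as a subgroup of order $(m/n)p^k$, giving index $n$. Similarly $H_0 := n\bbZ/m\bbZ \times \{0\}$ is a subgroup of $H$ of order $m/n$, giving index $p^k n$.

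The main obstacle is the coprimality step $\gcd(1-\alpha, m) = 1$ needed to finish (a) and (b); it is this step, and not the Sylow-conjugation calculation, that requires real input. It should follow from the standing hypothesis that $\alpha$ has multiplicative order exactly $p$ (interpreted so that $\alpha$ acts without nontrivial fixed vectors on each primary component of $\bbZ/m\bbZ$), since any prime $r \mid m$ with $\alpha \equiv 1 \pmod{r^{v_r(m)}}$ would produce a nontrivial $p'$-quotient of $H$ and block the use of Lemma~\ref{lem:grp} in the intended application.
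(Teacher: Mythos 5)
Your reduction of (a) and (b) to the single statement $S=H$, the conjugation computation $(a,0)(0,x)(a,0)^{-1}=(a(1-\alpha^x),x)$, and your construction for (c) (which coincides with the paper's: $H_0=\ker(\bbZ/m\bbZ\to\bbZ/n\bbZ)$ inside $\bbZ/m\bbZ$, and $H_1=H_0\rtimes\bbZ/p^k\bbZ$) are all correct, and for (a)/(b) your route -- generating $\bbZ/m\bbZ\times\{0\}$ by conjugates of the standard Sylow subgroup -- differs from the paper's, which instead checks that the two generators $(0,1)$ and $(1,1)$ of $H$ have order dividing $p^k$. But as it stands the proposal is not a proof: the step you yourself single out, $\gcd(1-\alpha,m)=1$, is left unproved, and the justification you offer (that it ``should follow'' because otherwise the lemma would be useless in its intended application) is circular. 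Worse, that step genuinely does not follow from the stated hypothesis that $\alpha$ has multiplicative order $p$ in $(\bbZ/m\bbZ)^*$. Take $p=3$, $m=35$: every element of order $3$ in $(\bbZ/35\bbZ)^*$ is $\equiv 1\pmod 5$, so $5\mid 1-\alpha$, and then $H\cong \bbZ/5\bbZ\times(\bbZ/7\bbZ\rtimes\bbZ/3^k\bbZ)$ has $\bbZ/5\bbZ$ as a nontrivial prime-to-$3$ quotient, so (a) and (b) themselves fail. The coprimality is thus an extra condition on the choice of $\alpha$ (equivalently, $\alpha$ must have order exactly $p$ modulo every prime power dividing $m$, which forces every prime $q\mid m$ to satisfy $q\equiv 1\pmod p$), not a consequence of the hypotheses, so no argument can close your gap without strengthening the assumptions.

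In fairness, you have put your finger on precisely the point the paper glosses over: its computation $(1,1)^{p^k}=\bigl(\frac{1-\alpha^{p^k}}{1-\alpha},0\bigr)=(0,0)$ silently divides by $1-\alpha$, i.e.\ assumes the very invertibility you could not derive. Read as the geometric sum, the first entry of $(1,1)^{p^k}$ is $1+\alpha+\cdots+\alpha^{p^k-1}=p^{k-1}(1+\alpha+\cdots+\alpha^{p-1})$, and since $p^{k-1}$ is a unit mod $m$ this vanishes exactly when $1-\alpha$ is a unit, i.e.\ exactly under the strengthened hypothesis. So both your argument and the paper's hinge on the same unstated condition; the difference is that the paper asserts the step, while you flagged it but did not (and, under the hypotheses as written, could not) prove it. Part (c) needs no such condition and your treatment of it is fine; the decisive step of (a) and (b), however, remains a genuine hole in the proposal.
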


\begin{proof}
The elements $(0,1)$ and $(1,1)$ generate $H$, so for a., it
suffices to show that their order divides $p^k$ (and hence is
$p^k$). We have $(0,1)^{p^k} = (0,p^k)=(0,0)$. Now, since
$\alpha^{p^k}=(\alpha^p)^k=1$, we have
\[
(1,1)^{p^k} = \left(\frac{1-\alpha^{p^k}}{1-\alpha},0\right)=(0,0).
\]

b.\! follows from a.: Indeed, let $\bar H = H/N$ be a quotient of
$H$ with order prime-to-$p$. Thus $p^k$ divides the order of $N$,
and hence $N$ contains a $p$-sylow subgroup of $H$. As $N\lhd H$, it
contains all the $p$-sylow subgroups. By a., $N= H$ and $\bar H =
1$, as desired.

To show c., assume $n\mid m$. Let $H_0$ be the kernel of the natural
map $\bbZ/m\bbZ\to \bbZ/n\bbZ$. Then $H_0 \leq \bbZ / m \bbZ\leq H$.
Note that $H_0$ is invariant under the action of $\bbZ/p^k\bbZ$
(i.e., $a\in H_0\Rightarrow \alpha a\in H_0$) and define
$H_1=H_0\rtimes \bbZ/p^k\bbZ$. Now,
\[
(H:H_0) = (H:\bbZ/m\bbZ)(\bbZ/m\bbZ:H_0) = p^k n
\]
and $(H:H_1)=n$, as desired.
\end{proof}

%The cyclic group $C_2$ of order $2$ acts on any abelian group by
%inversion. A cyclic group $C_{2^k}$ of order $2^k$ then acts on any
%abelian group via the (unique) epimorphism $C_{2^k} \to C_2$.
%
%%\begin{lemma}\label{lem:semi}
%%Let $m\geq 1$ be an odd number and $k\geq1$ be an integer. Let
%%$H=C_m\rtimes C_{2^k}$ be the semidirect product of the cyclic
%%groups (under the above action). The trivial group is the only
%%quotient of $H$ of odd order.
%%\end{lemma}
%\begin{lemma}\label{lem:semi}
%Consider the cyclic groups $C_m$ and $C_{2^k}$ where $m\geq 1$ is an
%odd number and $k\geq 1$ is an integer. Let $H=C_m\rtimes C_{2^k}$
%be their semidirect product (under the above action). Then, there
%are no nontrivial quotients of $H$ of odd order.
%\end{lemma}
%\begin{proof}
%Let $H_0\lhd H$ be a normal subgroup. Then $H/H_0$ is a quotient
%of\footnote{rephrase} odd order if and only if $2^k$ divides the
%order of $H_0$. If $2^k$ divides the order of $H_0$, then $H_0$
%contains a $2$-sylow subgroup of $H$. Since $H_0$ is normal, this
%implies that it contains all $2$-sylow subgroups, and hence all
%elements of order $2^k$. Let $x\in C_{2^k}$ and $y\in C_{m}$ be
%respective generators of $C_{2^k}$ and $C_m$. Now, $(1,x)$ and
%$(1,x)^{(y,1)}=(y^{-2},x)$ lie in $H_0$ because they are of order
%$2^k$. Therefore $(y^{-2},1)=(y^{-2},x)(1,x)^{-1}\in H_0$ which
%implies that $H=H_0$, as $y^{-2}$ is a generator of $C_m$.
%\end{proof}

\begin{proposition}
Let $p$ be a prime, let $K$ be a separable extension of a countable
Hilbertian field $K_0$ such that the Galois closure is prime-to-$p$
over $K_0$. Let $e\geq 2$. Then for almost all $\bfsigma\in
\gal(K_0)^e$ the field $M=KK_{0s}(\bfsigma)$ is a PAC extension of
$K$ and it has a separable extension of every degree.
\end{proposition}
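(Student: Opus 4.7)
The plan is to follow the pattern of the proof of Proposition~\ref{prop:sol}, substituting the alternating group $A_n$ (which had no nontrivial common quotient with the prosolvable group $G/N$) by a semidirect product $H = \bbZ/m\bbZ \rtimes \bbZ/p^k\bbZ$ furnished by Lemma~\ref{lem:semi}, which has no nontrivial prime-to-$p$ quotient.

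First, Proposition~\ref{thm:cite}a gives that for almost all $\bfsigma\in\gal(K_0)^e$ the field $M_0 = K_{0s}(\bfsigma)$ is a PAC extension of $K_0$ and $G = \langle\bfsigma\rangle$ is free profinite of rank $e\geq 2$. Fix such $\bfsigma$ and set $M = K K_{0s}(\bfsigma)$; Proposition~\ref{thm:cite}b then yields that $M/K$ is PAC. Let $\hat K$ be the Galois closure of $K/K_0$, and put $N_0 = \gal(M)$ and $N = \gal(\hat K K_{0s}(\bfsigma))$. Then $N \lhd G$, $N \leq N_0$, and $G/N$ embeds into $\gal(\hat K/K_0)$; since the latter is prime-to-$p$ by hypothesis, so is $G/N$.

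The remaining task is to show that $N_0$ has an open subgroup of every finite index $n$. Given $n\geq 2$, I would write $n = p^j n'$ with $\gcd(n',p)=1$ and, by Dirichlet's theorem, choose a prime $\ell\equiv 1\pmod p$ with $\ell\neq p$ and $\ell$ coprime to $n$. In the case $j\geq 1$, take $H = \bbZ/m\bbZ\rtimes\bbZ/p^j\bbZ$ with $m = n'\ell$; in the case $j=0$, take $H = \bbZ/m\bbZ\rtimes\bbZ/p\bbZ$ with $m = n\ell$. In either case $p\nmid m$ and $p\mid\phi(m)$, so Lemma~\ref{lem:semi}c produces a subgroup of $H$ of index exactly $n$. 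Since $H$ is $2$-generated and $G$ is free of rank $\geq 2$, $G$ surjects onto $H$; and by Lemma~\ref{lem:semi}b every prime-to-$p$ quotient of $H$ is trivial, so $H$ and $G/N$ have no nontrivial common quotient. Lemma~\ref{lem:grp} then implies that $H$ is itself a quotient of $N_0$, so $N_0$ has an open subgroup of index $n$, and by Galois correspondence $M$ admits a separable extension of degree $n$.

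The only genuinely nontrivial point is arranging a single group $H$ which simultaneously has a subgroup of index $n$ and no nontrivial prime-to-$p$ quotient for whatever target $n$ is prescribed. Lemma~\ref{lem:semi} was tailored for exactly this purpose, so what remains is the mild bookkeeping of choosing $m$ via Dirichlet and splitting into cases according to whether $p\mid n$.
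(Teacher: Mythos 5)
Your proposal is correct and follows essentially the same route as the paper: free $G=\langle\bfsigma\rangle$, the PAC transfer via Proposition~\ref{thm:cite}, and Lemmas~\ref{lem:grp} and~\ref{lem:semi} applied to $H=\bbZ/m\bbZ\rtimes\bbZ/p^k\bbZ$ with $m$ chosen via Dirichlet. The only difference is cosmetic bookkeeping (you split cases on whether $p\mid n$, while the paper extracts both indices $n$ and $np^k$ from one $H$ using the two subgroups of Lemma~\ref{lem:semi}c), and you make explicit the point, left implicit in the paper, that $H$ is $2$-generated and hence a quotient of $G$.
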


\begin{proof}
Let $\hat K$ be the Galois closure of $K/K_0$, so every finite
quotient of $\gal(\hat K/K_0)$ has order prime to $p$. As in the
proof of Proposition \ref{prop:sol}, for almost all $\bfsigma\in
\gal(K_0)^e$ the field $M_0=K_{0s}(\bfsigma)$ has a free absolute
Galois group of rank $e$, namely $G=\left< \bfsigma \right>$, and
$M=M_0K$ is a PAC extension of $K$. Let $N_0 = \gal(M)$ be the
absolute Galois group of $M$ and let $N = \gal(\hat
KK_{0s}(\bfsigma))$. Then $N\leq N_0\leq G$, $N$ is normal in $G$,
and $G/N = \gal(\hat K K_{0s}(\bfsigma)/K_0)$. The restriction map
$G/N \to \gal(\hat K/K_0)$ is an embedding, so every finite quotient
of $G/N$ has order prime to $p$ (because it is a subgroup of a
finite quotient of $\gal(\hat K/K_0)$).

By Galois correspondence, it suffices to show that $N_0$ has open
subgroups of any index. Let $n$ be a positive integer prime to $p$
and $k\geq 1$. Let $l$ be a prime number such that $l\nmid n$ and
$p\mid l-1$ and let $m=nl$ (such a prime $l$ exists since there are
infinitely many primes in the arithmetic progression $l\equiv 1\pmod
p$). Then $p\nmid m$ and $p\mid \phi(m)$. Let $H = \bbZ/m\bbZ\rtimes
\bbZ/p^k\bbZ$ as in Lemma~\ref{lem:semi}. Then $H$ and $G/N$ have no
nontrivial common quotients (by Lemma~\ref{lem:semi}b.). By Lemma
\ref{lem:grp} and Lemma~\ref{lem:semi}c., $N_0$ has open subgroups
of index $n$ and $np^k$, i.e., of any index.
\end{proof}

\begin{proof}[Proof of Theorem \ref{thm:pi}]
For a countable field $K$, Theorem~\ref{thm:pi} follows immediately
from Theorem~\ref{thm:main} and the above proposition. For an
uncountable field the theorem follows from the L\"owenheim-Skolem
Theorem (as in the proof of Theorem \ref{thm:sol}).
\end{proof}

\subsection{A remark}
In this work we saw that the property for a field $K$ to have PAC
extension with separable extension of degree $n$, implies that every
non-degenerate quadratic form of dimension $n$ is isomorphic to a
scaled trace form. This leads to the problem of classifying fields
$K$ which have a PAC extension with a separable extension of a given
degree. This is a refinement of \cite[Problem
3.9]{Bary-SorokerDTRF}, raised in relation to Dirichlet's theorem
for polynomial rings in one variable.

%In \cite{Bary-SorokerDTRF}, the first author used such a property to
%deduce Dirichlet's theorem for the polynomial ring in one variable
%over $K$. In light of this, we recall a variant of \cite[Problem
%3.9]{Bary-SorokerDTRF}, namely, classify fields $K$ which has a PAC
%extension with a separable extension of a given degree.

%Both Theorems \ref{thm:sol} and \ref{thm:pi}, were deduced from
%theorem \ref{thm:main} using a similar argument. Namely, for the
%appropriate field $K$, we constructed a PAC extension $M/K$ with
%many separable extensions $N/M$. This leads to the question of
%classifying fields $K$ having PAC extensions $M$ with seperable
%extensions of a given degree. This is an interesting question by
%itself (cf. \cite[Problem 17]{Bary-SorokerDTRF}). In
%\cite{Bary-SorokerDTRF}, the first author used that property to
%deduce Dirichlet's theorem for polynomial ring in one variable over
%$K$.
% ----------------------------------------------------------------
\bibliographystyle{amsplain}

\end{document}